\newtheorem{theorem}{Theorem}[section]
\newtheorem{lemma}[theorem]{Lemma}
\newtheorem{example}[theorem]{Example}
\newtheorem{proposition}[theorem]{Proposition}
\newtheorem{conjecture}[theorem]{Conjecture}
\newtheorem{definition}[theorem]{Definition}
\newtheorem{remark}[theorem]{Remark}
\newcommand{\comments}[1]{}
\newcommand{\stab}[2]{\textnormal{Stab}_{#1} (#2)}
\newcommand{\cp}[1]{\text{Crit}_p (#1)}
\newcommand{\cv}[1]{\text{Crit}_v (#1)}
\newcommand{\vc}{\mathcal{L}}
\newcommand{\mc}{\mathcal{M}}
\newcommand{\vt}{\mathcal{T}}
\newcommand{\morph}[1]{\mathcal{M} (#1)}
\newcommand{\Hom}{\textnormal{Hom}}
\newcommand{\Ext}{\textnormal{Ext}}
\newcommand{\cone}[1]{\textnormal{cone} (#1)}
\newcommand{\cones}{\textnormal{cone}}
\newcommand{\support}[1]{\textnormal{supp} (#1)}
\newcommand{\gsupport}[2]{\textnormal{supp}^{#2} (#1)}
\newcommand{\fs}[2]{\mathcal{F}_{#2}(#1)}
\newcommand{\pfs}[2]{\mathcal{A}_{#2}(#1)}
\newcommand{\cores}{W}
\begin{document}

\title{On 2d-4d motivic wall-crossing formulas}

\author{Gabriel Kerr and Yan Soibelman}

\begin{abstract}
In this paper we propose definitions and examples of categorical enhancements 
of the data involved in the $2d-4d$ wall-crossing formulas which generalize both Cecotti-Vafa and Kontsevich-Soibelman motivic wall-crossing formulas. 
\end{abstract}
\maketitle
\section{Introduction}
In the foundational work \cite{ks08} the authors proposed a categorical 
framework 
for motivic Donaldson-Thomas theory.  Among other things they established a 
multiplicative wall-crossing formula which shows how the Donaldson-Thomas 
invariants change when the stability condition on the category crosses a real 
codimension one subvariety called {\it wall}.

It was also explained in the loc. cit. that the wall-crossing 
formulas themselves do not require the categorical framework. They arise when 
one has a much simpler structure consisting of a Lie algebra over 
$\mathfrak{g}=\oplus_{\gamma \in \Gamma}\mathfrak{g}_{\gamma}$ over ${\mathbb 
Q}$ graded by a free abelian group $\Gamma$ (lattice) and endowed with the 
so-called {\it stability data}. The latter consist of the homomorphism of 
abelian groups $Z:\Gamma \to {\mathbb C}$ (central charge) and a collection of 
elements $a(\gamma)\in \mathfrak{g}_{\gamma}, \gamma\in  \Gamma-\{0\}$. The 
stability data satisfy one axiom called Support Property. We are going to recall 
the details in the next section. Having stability data one can define for each 
strict sector $V\subset {\mathbb R}^2$ a pronilpotent Lie algebra 
$\mathfrak{g}_V$ as well as the pronilpotent Lie group 
$G_V=exp(\mathfrak{g}_V)$. The element $A_V=exp(\sum_{Z(\gamma)\in 
V}a(\gamma))$ is well-defined as an element of $G_V$. For each ray $l\subset V$ 
one has an  element $A_l\in G_l\subset G_V$. The wall-crossing formula is in 
fact a collection of the following formulas, one for each strict sector $V$: 

$$A_V=\prod_{l\subset V}A_l.$$
Here the product in the RHS is taken in the clockwise order. It can contain 
countably many factors.

It was explained in the loc.cit. that the above data appear in many different 
frameworks.
In particular the Donaldson-Thomas theory of a $3$-dimensional Calabi-Yau 
category proposed in the loc. cit.  gives an example of a graded Lie algebra
endowed with stability data.

In the most abstract setting  the wall-crossing formulas appears appear in the 
following way.
Suppose one has a triangulated $A_\infty$-category $\mathcal{C}$ which is 
linear over some ground field. The
lattice $\Gamma$ is the $K$-group (or its quotient) and the Lie algebra 
$\mathfrak{g}$ is the motivic Hall algebra $H(\mathcal{C})$ defined in 
\cite{ks08}.
Choosing a stability structure on $\mathcal{C}$ one can define for each strict 
sector $V$ as above a subcategory $\mathcal{C}_V$ generated by semistables with 
the central charge in $V$. This gives the group $G_V$ as the group of invertible
elements in the natural completion of the associative algebra 
$H(\mathcal{C}_V)$.
 
Then 
the wall-crossing formula can be written  as the product
$$A^{Hall}_V=\prod_{l\subset V}A_l^{Hall}.$$

Each factor in the RHS represents the input of semistable objects of 
$\mathcal{C}$ with the central charge belonging to a particular ray $l$.
One can change the central charge without changing the LHS. Then we obtain the 
formula which gives the equality of products of inputs of semistable objects for 
two different stability structures. Intuitively one can think that in the 
simplest case those stability structures are separated by
a real codimension one ``wall''. This explains the term ``wall-crossing 
formula''.

The authors of \cite{ks08} considered an important class of categories for 
which the wall-crossing formula can be further simplified. This is the case 
when $\mathcal{C}$ is a $3$-dimensional
 Calabi-Yau category ($3CY$ category for short). 
In this case  there is an 
algebra homomorphism from each $H (\mathcal{C}_V)$ 
to the so-called motivic quantum torus.  It induces a 
wall-crossing formula in a much simpler algebra, namely in the motivic quantum 
torus. Those are wall-crossing formulas 
for motivic Donaldson-Thomas invariants of the category $\mathcal{C}$. There is 
a ``quantization parameter'' in the story, which is
the Lefshetz motive $\mathbb{L}$, i.e. the motive of the affine line. One can 
consider a ``quasi-classical limit'' as ${\mathbb L}\to 1$. It is well-defined 
under some assumptions.   
The underlying graded Lie algebra is the one of the 
Hamiltonian vector fields 
on the Poisson torus (or its quantization) associated with the quotient of 
$K_0(\mathcal{C})$. It is naturally graded by this quotient, and it is endowed 
with the integer skew-symmetric form  induced by the Euler form. Graded 
components of this Lie algebra are $1$-dimensional. Hence elements 
$a(\gamma)$ are rational numbers. It was explained in \cite{ks08}  that the 
inverse M\"obius transform produces a collection of numbers $\Omega(\gamma)$ 
which are ``tend'' to be integers. The number $\Omega(\gamma)\in \mathbb{Z}$ is 
called the numerical Donaldson-Thomas invariant of class $\gamma$. (DT-invariant 
for short). It measures the ``number'' of semistable objects of $\mathcal{C}$ 
having a fixed $K$-theoretical class $\gamma$.

The above class of wall-crossing formulas covers a variety of examples 
which include Calabi-Yau $3$-folds and quivers with potential. They are known in 
gauge theory as $4d$ wall-crossing formulas for BPS invariants (or refined BPS 
invariants in the case of quantum tori).

Another class of examples is known as $2d$ wall-crossing formulas. 
They are associated with finite-dimensional simple Lie algebras graded by their 
root lattices. Contrary to the $4d$ wall-crossing formulas they do not have an 
obvious categorical or motivic origin.

Finally, there are so-called $2d-4d$ wall-crossing formulas (see \cite{gmn12}) 
which also follow from the formalism of \cite{ks08}, but 
similarly to the $2d$ case without categorical or motivic examples.

Despite of various unfinished attempts to find the categorical framework for 
the $2d$ and $2d-4d$ wall-crossing formulas, 
the problem is widely open. One of the purposes of this paper is to propose 
such a categorical  framework  and to illustrate it 
in algebraic and geometric examples.

 {\it Acknowledgments.}  We thank to E. Diaconescu and M. Kontsevich for useful 
discussions on the topic. 
In particular, our geometric example is a result of the re-thinking of the 
physically motivated computations made by Diaconescu.
 Y.S. thanks to IHES for excellent research conditions. 
 His work was partially supported by an NSF grant and by Simu-Munson Star 
Excellence award of KSU. G.K. was partially supported by a Simons collaboration grant.

\section{Groupoids and stability data on graded Lie algebras}

\subsection{Groupoids and associated graded Lie algebras}
This section will describe a generalization of the definition of stability data 
as introduced 
in \cite[Section~2]{ks08} to groupoids. In fact, this generalization is in some 
sense a restricted version of the usual notion of stability data. To accomplish 
this generalization,  we begin by recalling the structures considered in 
\cite{gmn12}.  There, the authors introduced the following algebraic data which 
are used in the   2d-4d wall-crossing formulas discussed in the loc.cit. :
\begin{enumerate}
	\item A lattice $\Gamma$ with anti-symmetric pairing $\left< , \right>$.
	\item A connected groupoid $\mathbb{V}$ with objects $\mathcal{V} \sqcup 
\{o\}$ 
whose automorphism groups are canonically isomorphic to $\Gamma$. 
	\item A central charge $Z : \mathbb{V} \to \mathbb{C}$ which is a 
homomorphism 
of groupoids (where $\mathbb{C}$ is regarded as the groupoid with one object).
	\item A function $\Omega : \Gamma \to \mathbb{Z}$.
	\item A function $\mu : \morph{\mathbb{V}}^\circ \to \mathbb{Z}$ 
	where $\morph{\mathbb{V}}^\circ$ are the morphisms of $\mathbb{V}$ with 
distinct source and target.
	\item A 2-cocycle $\sigma \in C^2 (\mathbb{V}, \{\pm 1\} )$.
\end{enumerate} 
The corresponding wall-crossing formulas (WCF for short)  
incorporate both 2d Cecotti-Vafa WCF and 
4d Kontsevich-Soibelman WCF (non-motivic version). \footnote{In fact the framework of 
\cite[Section~2]{ks08} covers all types of wall-crossing formulas, including 
2d, 4d, and 2d-4d. However the tradition in physics literature is to call  2d 
wall-crossing formulas ``Cecotti-Vafa WCF'', while 4d wall-crossing formulas are
called ``Kontsevich-Soibelman WCF''.}

For this paper, we will begin  with a small groupoid $\mathbb{V}$ and 
its $2$-cocycle $\sigma$, building a generalized stability data from this point.
Following \cite{gmn12}, we will write the objects of $\mathbb{V}$ using the notation like $i,j,k$ and the  morphism sets $\Hom_{\mathbb{V}} (i,j)$ will be often denoted by $\Gamma_{ij}$. Composition in $\mathbb{V}$ will be written additively.  If $\gamma_1$ and $\gamma_2$ are not composable, we define $\sigma (\gamma_1 , \gamma_2) = 0$.

In our setting, we will allow for disconnected groupoids (i.e. groupoids with 
potentially 
more than one isomorphism class). Write $\mathbb{W}$ for the unique groupoid 
equivalent to $\mathbb{V}$ for which, given distinct $i, j \in \mathbb{W}$, 
$\Hom_{\mathbb{W}} (i,j) = \emptyset$. Fix  equivalences
\begin{align}
F : \mathbb{V} & \to \mathbb{W},  & G : \mathbb{W} & \to \mathbb{V}, & \eta : GF & \Rightarrow 1_\mathbb{V}. 
\end{align}

Taking $\mathbb{C}$ to be the groupoid with one object,  morphisms complex 
numbers and composition equal to addition, 
we will frequently consider the additive group of groupoid homomorphisms 
$\Hom_{grpd} ( \mathbb{V}, \mathbb{C})$. We now give an alternative 
characterization of this group. For every $i \in Ob (\mathbb{W})$, consider the 
groups
\begin{align*}
\Gamma_i & = \Hom_{\mathbb{W}} (i,i), \\
W_i & = \left\{ \sum a_j e_j \in \mathbb{Z}^{F^{-1} (i)} : \sum a_j = 0 \right\},
\end{align*} 
and define
\begin{align*}
\Gamma_\mathbb{V}:=  \bigoplus_{i \in Ob (\mathbb{W})} \left( \Gamma_i \oplus W_i \right).
\end{align*}
We note that $\Gamma_\mathbb{V}$ does not depend on the equivalence $F$. We take 
\begin{align*} \pi_W : \Gamma_\mathbb{V} \to \oplus_{i \in Ob (\mathbb{W})} W_i \end{align*} to be the projection.
\begin{lemma}
	There is an isomorphism of abelian groups 
	\begin{align}
	\Hom_{grpd} (\mathbb{V}, \mathbb{C}) \cong \Hom_{Ab} ( \Gamma_\mathbb{V} , \mathbb{C} ).
	\end{align}
\end{lemma}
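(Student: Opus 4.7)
The plan is to construct an explicit isomorphism $\Phi : \Hom_{grpd}(\mathbb{V}, \mathbb{C}) \to \Hom_{Ab}(\Gamma_\mathbb{V}, \mathbb{C})$ and its inverse $\Psi$, using the equivalence data $(F, G, \eta)$ to fix canonical choices. Since $\mathbb{V}$ has no morphisms between objects in different connected components, any $Z : \mathbb{V} \to \mathbb{C}$ splits as a product of its restrictions $Z_i$ to the full subgroupoid $\mathbb{V}_i$ on $F^{-1}(i)$, indexed by $i \in \mathrm{Ob}(\mathbb{W})$. The right hand side splits correspondingly as $\bigoplus_i \Hom_{Ab}(\Gamma_i \oplus W_i, \mathbb{C})$, so it suffices to work one component at a time.

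Fix $i$, set $x_i := G(i) \in F^{-1}(i)$, and write $V_i := F^{-1}(i)$. For each $j \in V_i$ one has $GF(j) = x_i$, and $\eta$ provides a canonical isomorphism $\eta_j : x_i \to j$ in $\mathbb{V}$. Given $Z : \mathbb{V}_i \to \mathbb{C}$, define $\Phi(Z)$ on $\Gamma_i$ by $\alpha \mapsto Z(G(\alpha))$, using the group isomorphism $G : \Gamma_i = \Hom_\mathbb{W}(i,i) \xrightarrow{\sim} \Hom_\mathbb{V}(x_i, x_i)$ provided by full faithfulness, and on $W_i$ by sending the $\mathbb{Z}$-basis $\{e_j - e_{x_i}\}_{j \neq x_i}$ to $Z(\eta_j)$ and extending linearly. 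Conversely, for $\phi$ on the right hand side, any morphism $\gamma : j \to k$ in $\mathbb{V}_i$ has a unique factorization $\gamma = \eta_k \circ \tilde\alpha \circ \eta_j^{-1}$ with $\tilde\alpha \in \Hom_\mathbb{V}(x_i, x_i)$; full faithfulness of $G$ then produces a unique $\alpha \in \Gamma_i$ with $G(\alpha) = \tilde\alpha$. Set
\[
\Psi(\phi)(\gamma) := \phi(\alpha) + \phi(e_k - e_{x_i}) - \phi(e_j - e_{x_i}).
\]

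The required verifications are: (i) $\Psi(\phi)$ is a groupoid homomorphism; (ii) $\Phi$ and $\Psi$ are mutually inverse; (iii) both are additive. For (i), given $\delta : k \to \ell$ and $\gamma : j \to k$, the automorphism extracted from $\delta \circ \gamma$ is $\tilde\alpha_\delta \circ \tilde\alpha_\gamma$ since the middle $\eta_k^{-1} \circ \eta_k$ cancels, and the $W_i$-contributions telescope because $(e_\ell - e_k) + (e_k - e_j) = e_\ell - e_j$; additivity of $\phi$ then delivers $\Psi(\phi)(\delta \circ \gamma) = \Psi(\phi)(\delta) + \Psi(\phi)(\gamma)$. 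Identities $\Psi\Phi = 1$ and $\Phi\Psi = 1$ follow by re-expanding $\gamma$ through its factorization and by testing on generators of $\Gamma_i$ and $W_i$ respectively; additivity in (iii) is immediate from the pointwise definitions.

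The main obstacle is the compatibility check in (i), and more conceptually, the claim already made in the text that $\Gamma_\mathbb{V}$ does not depend on $F$. A different choice shifts the identification $G : \Gamma_i \to \Hom_\mathbb{V}(x_i, x_i)$ by an inner automorphism of $\Gamma_i$ and shifts each $\eta_j$ by post- or pre-composition with an element of $\Gamma_i$; the resulting shifts in the $\Gamma_i$- and $W_i$-components of $\Phi(Z)$ exactly cancel, precisely because $W_i$ is the augmentation-zero subgroup of $\mathbb{Z}^{V_i}$ (so a uniform shift of all $\eta_j$ by the same automorphism contributes nothing). Modulo this bookkeeping, the content of the lemma is that $W_i$ is exactly the universal receptacle encoding the freedom in the choices of the isomorphisms $\eta_j$.
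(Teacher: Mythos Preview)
Your proof is correct and follows essentially the same approach as the paper: the map $\Phi$ you define coincides with the paper's map $f$, sending $\gamma \in \Gamma_i$ to $Z(G(\gamma))$ and the basis element $e_j - e_{x_i}$ of $W_i$ to $Z(\eta_j)$. The paper stops at ``an elementary check shows this to be an isomorphism of groups,'' whereas you supply the explicit inverse $\Psi$, verify the groupoid-homomorphism property via the telescoping of the $W_i$-contributions, and add a discussion of independence from the choice of $(F,G,\eta)$; this is more detail than the paper gives but not a different argument.
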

\begin{proof}
For $j \in Ob (\mathbb{V})$ let $w_j = e_j - e_{GF j} \in W_{Fj}$ and note that $\{w_j : j \in F^{-1}(i)\}$ forms a basis for $W_i$. As $G$ is an equivalence, we have that $G : \Gamma_i \stackrel{\cong}{\longrightarrow} \Gamma_{Gi Gi}$.
Define the map \[ f: \Hom_{grpd} (\mathbb{V}, \mathbb{C}) \to \Hom_{Ab} ( \Gamma_\mathbb{V} , \mathbb{C} ) \]
by 
\begin{align}
f(Z) (w_j) & = Z (\eta_j) & f(Z) (\gamma) & = Z (G (\gamma)).
\end{align}
Here  $\mathbb{C}$ means the groupoid with one object and space of morphisms equal to the field of complex numbers.
An elementary check shows this to be an isomorphism of groups.
\end{proof}
When $\mathbb{V}$ is finite may extend the elements $w_j$ into a type $A$ root system. Generally, we take
\begin{align*}R_i & = \{e_j - e_k \in W_i\} ,& R_\mathbb{V} & = \oplus_{i \in \mathbb{W}} R_i .\end{align*}
We can now define a groupoid graded Lie algebra.
\begin{definition}
	A $\mathbb{V}$-graded Lie algebra $\mathfrak{g}$ is a $\Gamma_\mathbb{V}$-graded Lie algebra
	\[ \mathfrak{g} = \oplus_{\gamma\in \Gamma_\mathbb{V}} 
\mathfrak{g}_\gamma \] satisfying $\mathfrak{g}_\gamma \ne 0$ implies $\pi_W 
(\gamma ) \in R_\mathbb{V}$.
\end{definition} 
In the case where  $Ob (\mathbb{V})$ is finite, this implies that the grading has a map to a product of type $A$ root systems.

\begin{example} \label{ex:poistorus} Write $\mathbb{K}[\mathbb{V}]$ for twisted 
groupoid 
algebra of $\mathbb{V}$ defined by $\sigma$.  In particular, 
$\mathbb{K}[\mathbb{V}]$ has a basis $\{e_\gamma : \gamma \in \Gamma_{ij}\}$ 
with multiplication 
\begin{align*}
e_{\gamma_1} e_{\gamma_2} = \sigma (\gamma_1, \gamma_2)  e_{\gamma_1 + \gamma_2}.
\end{align*}
whenever $\gamma_1$ and $\gamma_2$ are composable and zero otherwise.

Given an anti-symmetric integer pairing $\left< , \right>$ on $\oplus_{j \in \mathbb{W}} \Gamma_j$, orthogonal on distinct summands, we may define compatibility of the pairing and $\sigma$ as 
\[ \sigma (\gamma_1, \gamma_2) = (-1)^{\left<F(\gamma_1), F(\gamma_2)\right>} \] 
for all $i \in Ob (\mathbb{V})$ and $\gamma_1, \gamma_2 \in \Gamma_{ii}$.

To define a $\mathbb{V}$-graded Lie algebra, if $\gamma \in \Gamma_{ij}$, 
we take the degree of $e_\gamma$ to be
\begin{align}
| e_\gamma | := F(\gamma) + e_j - e_i \in \Gamma_{\mathbb{V}}.
\end{align} 
and define the bracket
\begin{align} 
[e_{\gamma_1}, e_{\gamma_2}] = \sigma (\gamma_1, \gamma_2) \left< F(\gamma_1), F (\gamma_2) \right> \left( e_{\gamma_1 + \gamma_2} + e_{\gamma_2 + \gamma_1} \right).
\end{align}
\end{example}

\subsection{${\mathbb V}$-stability data}

The generalization of stability data of a $\Gamma$-graded Lie algebra (see 
\cite{ks08}) to the $\mathbb{V}$-graded case is straightforward.

\begin{definition}
	Let $\mathbb{V}$ be a groupoid with finitely many objects.
	Given a $\mathbb{V}$-graded Lie algebra $\mathfrak{g}$, stability data on $\mathfrak{g}$ is a pair $(Z, a)$
	\begin{enumerate}
		\item  $Z : \mathbb{V} \to \mathbb{C}$ is a homomorphism of groupoids, 
		\item  $a = \left( a ( \gamma ) \right)_{\gamma \in \Gamma_\mathbb{V}}$ is a collection of elements in $\mathfrak{g}$ where $a(\gamma) \in \mathfrak{g}_\gamma$.
	\end{enumerate}
	This data must satisfy the following {\bf Support Property}
	\begin{itemize}
		\item[] For a norm $\| \|$ on $\Gamma_\mathbb{V} \otimes \mathbb{R}$, there exists $C > 0$ such that for any $\gamma \in \support{a}$ we have
		 \begin{align} \label{eq:bounds} \| \gamma\| \leq C Z (\gamma ). \end{align}
	\end{itemize}
\end{definition}
Equivalently, the last condition means the following:  there exists a quadratic form 
$Q$ on $\Gamma_\mathbb{V} \otimes \mathbb{R}$ such that it is non-positive on 
$\ker (Z)$ and non-negative on the set of $\gamma$'s such that $a(\gamma)\ne 0$ 
(sometimes this set is called the {\it support} of stability data).

In other words, groupoid stability data is precisely stability on the 
$\Gamma_\mathbb{V}$-graded 
Lie algebra $\mathfrak{g}$ in the sense of \cite[Section~2.1]{ks08}. Fixing 
the quadratic form $Q$  we may define 
the subset of stability data $\stab{Q}{\mathfrak{g}}$ as in 
\cite[Section~2.2]{ks08}. By applying Theorems $2$ and $3$ of loc. cit., we may 
assert the following facts concerning $\stab{}{\mathfrak{g}}$:
\begin{enumerate}
	\item The set of stability data $\stab{}{\mathfrak{g}}$ 
	equals $\cup_{Q} \stab{Q}{\mathfrak{g}}$.
	\item The set of stability data $\stab{}{\mathfrak{g}}$ 
	can be endowed with a Hausdorff topology.
	\item For a quadratic form $Q_0$ and $Z_0 : \Gamma_\mathbb{V} \to \mathbb{C}$, let \[ U_{Q_0, Z_0} = \{Z \in \Hom_{Ab} (\Gamma_\mathbb{V}, \mathbb{C}) : Q_0 \text{ is negative definite on }\ker Z_0 \}. \]
	Then there exists a local homeomorphism $U_{Q_0, Z_0} \to 
\stab{Q_0}{\mathfrak{g}}$ 
which is a section of the projection to $\Hom_{Ab} (\Gamma_\mathbb{V}, 
\mathbb{C})$.
\end{enumerate}
As a result, for any strict sector $V$ in $\mathbb{C}$ (i.e. the one which does 
not contain a straight line) one can define an element 
$A_V=exp(\sum_{Z(\gamma)\in V, Q(\gamma)\ge 0}a(\gamma))$  of the pro-nilpotent 
group $G_{V,Z, Q}$ corresponding to the pro-nilpotent Lie algebra 
$\mathfrak{g}_{V,Z,Q}$. This satisfies the {\bf factorization property}
\[ A_V = A_{V_1} A_{V_2}  \]
where $V = V_1 \sqcup V_2$ in clockwise order.


\section{Categorification of 2d-4d wall-crossing formulas}

\subsection{Motivic correspondences and motivic Hall algebras} \label{sec:motcor}
Given an ind-constructible, pre-triangulated, $A_\infty$-category $\mathcal{C}$ 
over $\mathbb{K}$, 
we recall from \cite[Section~6.1]{ks08} the definition of the motivic Hall 
algebra. We refer to the loc.cit. about a convenient introduction to the 
motivic functions on algebraic varieties and algebraic stacks.

We will assume all our categories $\mathcal{C}$ are 
ind-constructible triangulated $A_{\infty}$-categories with  finite 
dimensional $Hom's$. We first have that $Ob (\mathcal{C})$ is an 
ind-constructible stack which is a countable disjoint union of quotient stacks 
\[ Ob (\mathcal{C} ) = \sqcup_{i = 1}^\infty Y_i/GL (N_i) \] for some $N_i\ge 1$. 

The motivic Hall algebra $H(\mathcal{C})$ is then the $Mot (Spec 
(\mathbb{K}))[\mathbb{L}^{-1}]$-module $\oplus_i Mot_{st} 
(Y_i, GL (N_i))[\mathbb{L}^{-1}]$, where  $Mot_{st} 
(Y_i, GL (N_i))$ denote the abelian group of motivic functions on the quotient 
stack $Y_i/GL (N_i)$. We recall that, given $E, F \in 
\mathcal{C}$, the 
$N$-truncated Euler characteristic is given as 
\begin{align*}
(E, F)_{\leq N} := \sum_{i\leq N} (-1)^i \dim_{\mathbb{K}} (\Hom_\mathcal{C} (E, F[i])).
\end{align*}

The product structure in $H (\mathcal{C})$ is defined such as follows. 
For any two constructible families $\pi_1: X_1 \to Ob (\mathcal{C}) , \pi_2 
: X_2 \to Ob (\mathcal{C})$ and $n \in \mathbb{Z}$ 
the authors of \cite{ks08} define the spaces
\begin{align*}
W_{n} = \{(x_1, x_2, \alpha) | x_i \in X_i,  \alpha \in \Ext^1 (\pi_2 (x_2), \pi_1 (x_1)), (\pi_2 (x_2), \pi_1 (x_1) )_{\leq 0} = n\}.
\end{align*}
The element $[W_n \to Ob (\mathcal{C})] \in H(\mathcal{C})$ is defined 
as taking  the cone of $\alpha : \pi_2 (x_2)[-1] \to \pi_1 (x_1)$. Then the 
product on $H (\mathcal{C})$ is given as 
\begin{align*}
[\pi_1: X_1 \to Ob (\mathcal{C}) ] \cdot [\pi_2 : X_2 \to Ob (\mathcal{C})] : = \sum_{n \in \mathbb{Z}} [W_n \to Ob (\mathcal{C})] \mathbb{L}^{-n}
\end{align*}
It is shown in loc. cit. that it makes $H(\mathcal{C})$ into an associative 
algebra.

We now introduce a more general version of motivic Hall algebras. 
Let $\iota: \cores \to Ob(\mathcal{C}) \times Ob ( \mathcal{C} )$ be a homogeneous 
ind-constructible affine subbundle of $Ext^*$ (with possibly empty fibers).  We 
will call $\cores$ a \textit{motivic correspondence} of $\mathcal{C}$.  Let 
$\support{\cores}$ be the substack of $Ob (\mathcal{C}) \times Ob (\mathcal{C})$ over 
which the fibers of $\cores$ are non-empty. Given $(A, B) \in \support{\cores}$ define the 
{\bf degree} $\deg_{A,B} (\cores )$ of $\cores$ over $(A,B)$ to be $k$ if $\iota^* (A, B) \subset \Ext^k (A,B)$. There may be some ambiguity here if $\iota^* (A,B) = 
{0}$, but we will take $0$ to be an element of any degree.  We will take $\cones 
: \cores \to Ob (\mathcal{C})$ to be the restriction of the map taking $\alpha \in 
\Ext^k (A, B)$ to the cone of $\alpha : A[-k] \to B$. Let $\nu : \support{\cores} \to 
\mathbb{Z}$ be a constructible function, which we call a \textit{weight}, so 
that $\support{\cores} = \sqcup_{n \in \mathbb{Z}} \gsupport{\cores}{n}$ and $\cores = 
\sqcup_{n \in \mathbb{Z}} \cores_n$ where $\cores_n$ is the fiber of $\iota$ over 
$\gsupport{\cores}{n}:=\nu^{-1}(n)$. Then, using the decomposition from $\nu$, one 
may decompose as before \[[(\pi_1 \times \pi_2)^* (\cores) \to Ob (\mathcal{C}) 
\times Ob (\mathcal{C})] = \sum_{n \in \mathbb{Z}} [(\pi_1 \times \pi_2)^* (\cores_n) 
\to Ob (\mathcal{C}) \times Ob (\mathcal{C})]. \] 
Taking cones of both sides gives the decomposition 
\[ [(\pi_1 \times \pi_2)^* (\cores) \to Ob (\mathcal{C})] = \sum_{n \in \mathbb{Z}} [(\pi_1 \times \pi_2)^* (\cores_n) \to  Ob (\mathcal{C})].  \]
The pair $(\cores, \nu)$ induces a product in the motivic Hall algebra as in the case 
where $\cores = \mathcal{E}xt^1$. 
In particular, one takes
\begin{align*}
[\pi_1: X_1 \to Ob (\mathcal{C}) ] \cdot_{(\cores, \nu)} [\pi_2 : X_2 \to Ob (\mathcal{C})] : = \sum_{n \in \mathbb{Z}} [(\pi_1 \times \pi_2)^* (\cores_n) \to Ob (\mathcal{C})] \mathbb{L}^{-n}
\end{align*}

\begin{definition}
	We call the pair $(\cores, \nu)$ an associative correspondence if $\cdot_{(\cores, \nu)}$ is an associative product in $H (\mathcal{C})$.
\end{definition}

Given an associative correspondence $(\cores, \nu)$, we may consider the algebra $H_{(\cores,\nu)} (\mathcal{C})$ which is identical to $H (\mathcal{C})$ as a vector space but endowed with product $\cdot_{(\cores, \nu)}$. 
\begin{figure}
	\begin{center}
		\begin{picture}(0,0)%
		\includegraphics{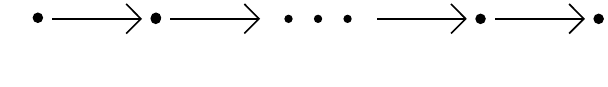}%
		\end{picture}%
		\setlength{\unitlength}{4144sp}%
		\begin{picture}(2766,391)(3361,-1160)
		\put(3486,-1096){\makebox(0,0)[lb]{\smash{$n$}}}
		\put(5531,-1096){\makebox(0,0)[lb]{\smash{$2$}}}
		\put(6041,-1096){\makebox(0,0)[lb]{\smash{$1$}}}
		\put(3926,-1096){\makebox(0,0)[lb]{\smash{$n - 1$}}}
		\end{picture}%
		\caption{\label{fig:An} The $A_n$ quiver}
	\end{center}
\end{figure}

\begin{example} \label{ex:An1}
Let $Q$ be the $A_n$ quiver with vertices $Q_0 = \{1, \cdots, n\}$ and arrows $Q_1 = \{ i \to (i -1) : 1 < i \leq n\}$ oriented as in Figure~\ref{fig:An}. Take $\{u_i: 1 \leq i \leq n \}$ to be the standard basis for the dimension vectors $\mathbb{N}^{Q_0}$. Let $\mathcal{A}$ be the dg category of representations of the path algebra $\mathbb{K} Q$. Let $\mathcal{B}$ be the full subcategory of indecomposable representations (concentrated in degree zero). By Gabriel's Theorem, the objects of $\mathcal{B}$ have dimension vectors 
\begin{align}
R^+ = \left\{ u_{ij}:= \sum_{k = i+ 1}^{j} u_k : 0 \leq i < j \leq n \right\}.
\end{align}
These correspond to positive roots of the $A_n$-root system. Write $M_{ij}$ for the corresponding indecomposable module and consider the correspondence
\begin{align}
\cores = \{(M_{kl}, M_{ij}, \gamma) : \gamma \in \Ext^1 (M_{ij}, M_{kl}), \cone{\gamma} \text{ is indecomposable} \}.
\end{align} 
Take $\nu : Ob (\mathcal{A}) \times Ob (\mathcal{A}) \to \mathbb{Z}$ to be constant and equal to zero. Then we claim that $(\cores, \nu)$ is an associative correspondence. 

To verify this, first one makes a basic computation to see
\begin{align} \label{eq:hom}
\Hom^* (M_{ij}, M_{kl}) \cong  \begin{cases}
\mathbb{K}\cdot \alpha  &  i \leq k < j \leq l, \\
(\mathbb{K} \cdot \beta)[-1] &  k < i \leq l < j, \\
0 & \text{otherwise.}
\end{cases} 
\end{align}
Furthermore, with the indices as in equation~\eqref{eq:hom} and setting $M_{ii} = 0$ for any $i$, we have
\begin{align}
\cone{\alpha} & \cong M_{jl} \oplus M_{ik}[1] \\
\cone{\beta} & \cong M_{il} \oplus M_{kj}.
\end{align}
Thus we may rewrite $\cores$ as 
\begin{align}
\cores = \{(M_{ki},M_{ij}, \beta): 0 \leq k < i < j \leq n\}.
\end{align}
where cone maps $(M_{ki}, M_{ij}, \beta)$ to $M_{kj}$.

For a $\mathbb{K}$-algebra $R$, let $U_n (R)$ be the algebra of strictly upper triangular matrices with entries in $R$. Then there is a natural algebra homomorphism 
\begin{align}
\int: H_{(\cores, \nu)} (\mathcal{A}) \to U_n (Mot (Spec (\mathbb{K})))
\end{align}
taking $[ \pi : X \to M_{ij}]$ to the upper triangular matrix $[X]$ in the $i$-th row and $j$-th column.
\end{example} 

\subsection{Motivic enhancements of ind-constructible categories}

We will need a relative version of an associative correspondence in the next 
section. 
For this, let $\mathcal{C}$ be an ind-constructible triangulated 
$A_\infty$-category, $\mathcal{D}$ be a small strict category and 
$\mathcal{M} (\mathcal{D})$ the set of morphisms in $\mathcal{D}$. We assume 
that $\mathcal{M} (\mathcal{D})$ is an ind-constructible stack.

\begin{definition}
	 Let $\Phi = (\{\mathcal{C}_{ij}, \epsilon_{ij}\}_{(i,j) \in Ob(D)^2}, \{(\cores_{ijk} , \nu_{ijk})\}_{(i,j,k) \in Ob (\mathcal{D})^3}) $ consist of the following structures:
	 \begin{enumerate}
	 	\item  A collection of full ind-constructible subcategories $\{\mathcal{C}_{ij}\}_{(i,j) \in Ob(D)^2}$ of $\mathcal{C}$.
	 	\item For every $(i,j) \in Ob (D)^2$, an ind-constructible morphism $\epsilon_{ij} : Ob ( \mathcal{C}_{ij} ) \to \Hom_{\mathcal{D}} (i,j)$.
	 	\item A weighted motivic correspondence $(\cores_{ijk} , \nu_{ijk})$ of $\mathcal{C}$ for every triple of objects in $\mathcal{D}$.
	 \end{enumerate} 
	 We say that $\Phi$ is a  motivic  enhancement of $\mathcal{D}$ by $\mathcal{C}$ if:
	 \begin{enumerate} 
	 	\item[(i)] for every triple $i,j,k \in Ob (\mathcal{D})$, $\mathcal{C}_{jk} \times \mathcal{C}_{ij} \subset \support{\cores_{ijk}}$,
	 	\item[(ii)] the product $[\pi_1 : X_1 \to \mathcal{C}_{jk}] \cdot [\pi_2 : X_2 \to \mathcal{C}_{ij}]$ induced by $(\cores_{ijk}, \nu_{ijk})$ yields a motive over $\mathcal{C}_{ik}$,
	 	\item[(iii)] the product induced by the correspondences $(\cores_{ijk}, \nu_{ijk})$ is associative. 
	 \end{enumerate}
\end{definition}
\begin{example} \label{ex:An2.5}
Take $\mathcal{D}$ to be the directed category with objects $\{0, \ldots, n\}$ and a morphism $i \to j$ if and only if $i \geq j$. Let $\mathcal{A}$ be the dg category of representations of the $A_n$-quiver as in Example~\ref{ex:An1} and take $\mathcal{C}_{ij} = \{M_{ji}[r] : r \in \mathbb{Z}\}$. For correspondences, $\cores_{ijk}$ take $\{(\beta[s - r] , M_{ij}[r], M_{jk}[s]) :r, s \in \mathbb{Z}\}$. In general, one may take non-constant $\nu_{ijk}$, but for simplicity consider $\nu_{ijk} = 0$. This enhancement ``spreads out'' the associative correspondence from the previous example across $\mathcal{D}$.
\end{example}
The fundamental invariant associated to a motivic  enhancement will now be defined.
\begin{definition}
	Let $\Phi =(\{\mathcal{C}_{ij}, \epsilon_{ij}\}, \{(\cores_{ijk} , \nu_{ijk})\}) $ be a motivic  enhancement 
	 of $\mathcal{D}$ by $\mathcal{C}$. The motivic Hall category 
\begin{align} H_\Phi (\mathcal{D})
	 \end{align} of the enhancement $\Phi$ is 
	 the category with objects $Ob (\mathcal{D})$ and morphisms 
	\begin{align} \Hom_{H_\Phi (\mathcal{D})} (a, b) = H (\mathcal{C}_{(a,b)}). \end{align}
\end{definition}

We will consider the following main example of motivic  enhancements throughout the paper. Let $\Lambda^\prime$ and $\Lambda$ be ind-constructible abelian groups and 
\begin{align} \phi : \Lambda^\prime \to \Lambda \end{align}
a group homomorphism. Suppose that $A$ is a subset of $\Lambda$. 
\begin{definition} \label{defn:gpdphi}
	The \textit{groupoid induced by $\phi$ and $A$}, which we will denote by 
$\Lambda_{\phi, A}$ (or simply $\Lambda_{\phi}$), has objects equal $A$ and, given $\lambda_1, 
\lambda_2$ in $A$
	\begin{align} \label{eq:deflamphi} \Hom_{\Lambda_\phi} (\lambda_1 , \lambda_2 ) = \phi^{-1} (\lambda_2 - \lambda_1). \end{align}
	Composition is addition in $\Lambda^\prime$.
\end{definition}
Note that if, for $\lambda_1, \lambda_2 \in A$,
\begin{align} \label{eq:Acondition} \lambda_2 - \lambda_1 \not\in \text{im} (\phi) \end{align}
then $\Hom_{\Lambda_\phi} (\lambda_1 , \lambda_2 ) = \emptyset$.

To obtain a motivic enhancement, suppose $\chi : K_0 (\mathcal{C}) \to \Lambda^\prime$ is an additional homomorphism.  For any $\lambda \in \Lambda$ let 
\begin{align} \mathcal{C}_\lambda = \{X \in \mathcal{C} : (\phi \circ \chi ) ([X]) = \lambda \}. \end{align}
Let $\cores$ be the motivic correspondence $\Ext^1$ and $\nu$ the $N$-truncated Euler characteristic with $N = 0$. 
\begin{definition} \label{defn:phimotext} The triple $\Phi = (\{\mathcal{C}_{\lambda_2 - \lambda_1}\}_{(\lambda_1, \lambda_2) \in A^2}, \cores, \nu)$ is called the motivic  enhancement induced by $\phi, \chi$ and $A$.
\end{definition} 
In cases when $\Lambda^\prime = K_0 (\mathcal{C})$ and $\chi$ is the identity, we will simply say that $\Phi$ is induced by $\phi$ and $A$.

In this instance the motivic correspondence is independent of the triple of objects. An extreme version of this is taking $\Lambda = \{0\}$ and obtaining a motivic Hall category $H_\Phi ( \Lambda)$ with one object whose endomorphism ring is the motivic Hall algebra $H( \mathcal{C})$. 

\begin{example} \label{ex:An2}
Consider the category $\mathcal{A}$ of representations of the $A_n$ quiver as in Example~\ref{ex:An1}. Then $K_0 (\mathcal{A}) = \mathbb{Z}^{Q_0}$ is isomorphic to the root lattice of type $A_n$ with roots $R = R^+ \cup -R^+$. There is a homomorphism 
\begin{align} \label{eq:phiAn}
\phi : K_0 (\mathcal{A}) \to \mathbb{Z}^{n + 1} 
\end{align}
taking the root $u_{ij}$ to $e_j - e_i$. No two elements in the standard basis $A = \{e_0, \ldots, e_{n}\}$ satisfies equation \eqref{eq:Acondition}. Thus the category $\mathbb{Z}^{n + 1}_{\phi, A}$ is the groupoid that has objects $\{0, \ldots, n\}$ (with the correspondence $i \mapsto e_i$) and exactly one morphism $u_{ij}: i \to j$. We will refer to such groupoids as \textit{trivial groupoids} in that they are equivalent to a trivial group. Letting $\mathcal{A}_0$ be the full subcategory of $\mathcal{A}$ whose objects have trivial $K_0 (\mathcal{A})$ classes, we note that 
\begin{align} \label{eq:mot2cat} \mathcal{C}_{ij} = \begin{cases} M_{ij} + \mathcal{A}_0 & \text{if } i \leq j, \\
M_{ij}[1] + \mathcal{A}_0 & \text{otherwise}. \end{cases} 
\end{align}
\end{example}

To show the flexibility of this construction, we define a motivic  enhancement which has correspondences that are highly dependent on the objects. Let $Q = (Q_0, Q_1)$ be any quiver and $\mathcal{D}_Q$ the path category of $Q$. By this we mean that the objects of $\mathcal{D}_Q$ are the vertices $Q_0$ of $Q$ and, given $i, j \in Q_0$, the morphisms $\Hom_{\mathcal{D}_Q} (i,j)$ is the set of directed paths from $i$ to $j$ in $Q$. Let $\{(\mathcal{C}_{ij}, \epsilon_{ij})\}_{(i,j) \in Q_0^2}$ be any collection of constructible subcategories of $\mathcal{C}$, partitioned by arrows in $Q_1$ from $i$ to $j$. Given such an $a \in Q_1$ let $\mathcal{C}_a$ be the subcategory $\epsilon_{ij}^{-1} (a)$.  Take $\mathcal{\cores} = \{(\cores_{ijk}, \nu_{ijk})\}$ to be a collection of motivic correspondences and, given a sequence of composable arrows
\[ i_1 \stackrel{a_1}{\longrightarrow} i_2 \stackrel{a_2}{\longrightarrow} \cdots  \stackrel{a_n}{\longrightarrow} i_{n + 1} \]
in $\mathcal{D}_Q$, let $E_r \in \mathcal{C}_{a_r}$ for $r \in \{1,\ldots , n\}$. We will assume that for any sequence of morphisms $\gamma_i \in \cores_{i(i+1)(i + 2)} \cap \Ext^* (E_i, E_{i + 1})$, we have that \begin{align} \label{eq:vanishing} \mu_{\mathcal{C}}^n (\gamma_n , \ldots, \gamma_1 ) = 0 . \end{align} 
Here the map $\mu_\mathcal{C}^n$ is the $n$-th multiplication map in the 
$A_\infty$-structure of $\mathcal{C}$.

\begin{definition}
We call  motivic enhancement minimal if the correspondence over every pair 
of composable morphisms has minimal dimension.

\end{definition}

\begin{proposition}
	Let $Q$ be a simply-laced quiver and  $\Phi^\prime = (\{\mathcal{C}_{a}\}_{a \in Q_1},\{(\cores_{ijk}, \nu_{ijk})\} )$ be an assignment satisfying equation \eqref{eq:vanishing}. Then $\Phi^\prime$  has a minimal motivic  enhancement.
\end{proposition}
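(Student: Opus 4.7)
The plan is to construct the enhancement by defining $\mathcal{C}_p$ inductively on the length of a path $p$ in $\mathcal{D}_Q$, starting from the given data $\mathcal{C}_a$ on arrows. For a composable pair of paths $p_1 : i \to j$, $p_2 : j \to k$, we take $\mathcal{C}_{p_2 p_1}$ to be the ind-constructible substack of $\mathcal{C}$ generated by cones $\cone{\gamma}$ for $E_1 \in \mathcal{C}_{p_1}$, $E_2 \in \mathcal{C}_{p_2}$, and $\gamma \in \cores_{ijk} \cap \Ext^1(E_2, E_1)$; we let $\epsilon_{ik}$ send such a cone to the composite path $p_2 p_1$. The vertex-triple correspondences and weights are those supplied by $\Phi^\prime$, refined to the $\Ext^1$-part with weight $\nu = 0$ in order to realize minimality.

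Next we verify the three axioms of a motivic enhancement. Containment of $\mathcal{C}_{jk}\times \mathcal{C}_{ij}$ in $\support{\cores_{ijk}}$ and the fact that the product lands in $\mathcal{C}_{ik}$ are built into the inductive construction, as $\mathcal{C}_{p_2 p_1}$ was \emph{defined} to contain precisely the cones produced by the product. The genuinely nontrivial condition is associativity of the resulting product on $H_\Phi(\mathcal{D}_Q)$.

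Associativity is the heart of the proof. Given three composable paths $p_1, p_2, p_3$ and motives $X_r \to \mathcal{C}_{p_r}$, one needs $(X_3 \cdot X_2) \cdot X_1$ and $X_3 \cdot (X_2 \cdot X_1)$ to agree as motives over $\mathcal{C}_{p_3 p_2 p_1}$. Restricting to fibers, this reduces to the statement that iterated cones of extension classes are independent of bracketing up to isomorphism of constructible families. The classical obstructions to such coherence are precisely the Massey products $\mu_{\mathcal{C}}^n(\gamma_n, \ldots, \gamma_1)$ of the extension classes involved, and hypothesis \eqref{eq:vanishing} is exactly the statement that these vanish whenever the $E_r$ and $\gamma_i$ come from the prescribed correspondences. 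The simply-laced assumption on $Q$ restricts the combinatorics of composable paths, so the vanishing hypothesis covers every associativity check that can arise.

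Minimality is then automatic: at each composable triple the correspondence used is the minimal-dimensional piece $\Ext^1$ with trivial weight, and no higher-degree extension classes contribute. The principal obstacle I expect is carrying out the inductive associativity check at the level of ind-constructible families rather than merely isomorphism classes. One must verify that the Massey-product vanishing, which is an equality of cohomology classes at each closed point, propagates uniformly through constructible families of extension parameters, so that the two bracketings are represented by the same motivic class rather than just by fiber-wise isomorphic objects.
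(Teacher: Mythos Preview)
Your overall strategy---build $\mathcal{C}_p$ on paths from the data on arrows and invoke the vanishing hypothesis to control associativity---is in the same spirit as the paper. But two concrete pieces of the construction do not match, and the mismatch matters.

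First, the correspondence on longer paths. You propose to reuse the given vertex-triple correspondences $\cores_{ijk}$, ``refined to the $\Ext^1$-part with weight $\nu=0$.'' The paper does something more specific. Having realized an object of $\mathcal{C}_a$ for $a=a_n\cdots a_1$ as a twisted complex $(\oplus E_j,\sum\alpha_j)$, the correspondence between $\mathcal{C}_a$ and $\mathcal{C}_b$ is \emph{not} the full $\Ext^1$ (or any fixed degree piece) between the twisted complexes. It is the image of the junction correspondence $\cores_{i_n i_{n+1} i_{n+2}}$ under $\iota\circ(\,\cdot\,)\circ\pi$, where $\pi$ projects onto the last summand $E_n$ and $\iota$ includes from the first summand $F_1$. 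This is what ``minimal'' means here: the correspondence over a composable pair has the smallest possible dimension, namely that of the original junction correspondence. Using the whole $\Ext^1$ between iterated cones would generally be larger and would not give a minimal enhancement. The accompanying weight is not $0$ either; the paper takes the pushforward of the original weight plus a correction by $\dim\ker(\kappa)$ for the linear map $\kappa:\Ext^*(E_n,F_1)\to\Ext^*(E,F)$.

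Second, the role of equation~\eqref{eq:vanishing}. You frame it as the vanishing of Massey obstructions to associativity. In the paper's construction its role is more direct: the condition $\mu_\mathcal{C}^n(\gamma_n,\ldots,\gamma_1)=0$ is precisely the Maurer--Cartan equation that makes $(\oplus E_j,\sum\alpha_j)$ a \emph{bona fide} twisted complex, so that $\mathcal{C}_a$ is well defined at all. Associativity then falls out of the explicit iterated-cone formula
\[
\rho_1\cdots\rho_n=\int\mathbb{L}^{\sum\nu}\rho_{(\oplus E_j,\sum\alpha_j)},
\]
rather than from a separate obstruction-theoretic argument. Your inductive cone description is compatible with this once the vanishing is interpreted correctly, but as written your proposal puts the hypothesis to work at the wrong step and misidentifies what ``minimal'' is buying.
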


\begin{proof}
An enhancement must be made so that there is a assignment of categories to all paths of arrows (or morphisms in $\mathcal{D}_Q$).  Let $a_j : i_j \to i_{j + 1}$ be a collection of morphisms for $1 \leq j \leq n$ and $a =  a_n \cdots a_1$. Take
\begin{align*}
\mathcal{C}_a := \left\{ \left( \oplus_{j = 1}^n E_j [|a_j| - 1] , \sum_{j = 1}^{n - 1} \alpha_j \right) : E_j \in \mathcal{C}_{i_j}, \alpha_j \in \cores_{a_{j + 1}, a_j} \cap \Ext^* (E_j, E_{j + 1} ) \right\}.
\end{align*}
Here the notation $(E, \alpha)$ is for a twisted complex.
If $b_k : i_{n + k} \to i_{n + k + 1}$ for $1 \leq k \leq m$ is another collection of morphisms and $b = b_m \cdots b_1$, we take 
\begin{align*}
(\cores_{i_1i_{n +1}i_{n + k + 1}}, \nu_{i_1i_{n +1}i_{n + k + 1}}) := (\iota_{n} \circ \cores_{i_{n - 1}i_ni_{n + 1}} \circ \pi_1, \iota_{n} \circ \nu_{i_ni_{n + 1}i_{n + 2}} \circ \pi_1).
\end{align*}
More explicitly, for all $\alpha \in \cores_{i_ni_{n + 1}i_{n + 2}} \cap \Ext^* (E_n, F_1 )$, take the composition
\begin{align*}
\oplus_{j = 1}^n E_j \stackrel{\pi_n}{\longrightarrow} E_{n} \stackrel{\alpha}{\longrightarrow} F_1 \stackrel{\iota}{\longrightarrow} \oplus_{k = 1}^m F_k . 
\end{align*}
to yield a correspondence in $\cores_{i_1i_{n +1}i_{n + k + 1}}$. To obtain a precise characterization of the weight, write $\kappa : \cores_{i_ni_{n + 1}i_{n + 2}} \to \cores_{i_1i_{n +1}i_{n + k + 1}}$ for this composition. Over $(E_n, F_1)$, this is a linear map $\kappa_{(E_n, F_1)} : \Ext^* (E_n, F_1) \to \Ext^* (E,F)$. Furthermore, owing to the description of the categories $\mathcal{C}_a$, there are constructible functors from $G_a :\mathcal{C}_a \to \mathcal{C}_{a_n}$ and $G_b :\mathcal{C}_b \to \mathcal{C}_{b_1}$. The weight $\nu_{i_1i_{n +1}i_{n + k + 1}}$ is the pushforward of the weight $\nu_{i_ni_{n + 1}i_{n + 2}}$ in the sense that 
\begin{align*}
\nu_{i_1i_{n +1}i_{n + k + 1}} (E,F) := \nu _{(G_b (b), G_a (a))} (G_a(E) , G_b(F)) + \dim (\ker (\kappa_{(G_a (E), G_b (F))}).
\end{align*}
It immediately follows that if $[\rho_{E_j} : pt \to \mathcal{C}_{a_i}]$ are objects taking $pt$ to $E_j$, then 
\begin{align}
\rho_1 \cdots \rho_n = \int_{\alpha_j \in \cores_{j(j + 1) (j + 2)}} \mathbb{L}^{\sum_{i = 1}^{n - 1} \nu_{j(j + 1) (j + 2)} (\rho_j, \rho_{j + 1})} \rho_{(\oplus E_j, \sum \alpha_j)}.
\end{align}
\end{proof}
We note that one may prove a version of this theorem purely in the triangulated setting where associativity of compositions is directly tied to the octahedral axiom.  We conclude this section with an example illustrating this connection.

\begin{example} \label{ex:octahedral} Let $f_0: A_0 \to A_1$ and $f_1 : A_1 \to A_2$ be degree $0$ morphisms in $\mathcal{C}$ such that $f_1 f_0 = 0$. Taking $\delta: A_1 \to B = \cone{f_0}$ and applying the long exact sequence gives a morphism $h : B \to A_2$ such that $f_1 = h \delta$. Thus, $f_0$ and $f_1$ will generate a bottom ``cap'' 
 	\begin{equation} \label{eq:bottomcap}
 	\begin{tikzpicture}[baseline=(current  bounding  box.center), scale=1.5]
 	\node (A1) at (0,2) {$A_2$};
 	\node (A2) at (4,2) {$A_1$};
 	\node (C1) at (1, 1) {$\star$};
 	\node (C2) at (2,1) {$B$};
 	\node (C3) at (3,1) {$\star$};
 	\node (D1) at (0,0) {$A_3$};
 	\node (D2) at (4,0) {$A_0$};
 	\path[->,font=\scriptsize]
 	(D2) edge node[right] {$f_0$} (A2)
 	(A2) edge node[above] {$f_1$} (A1)
 	(A1) edge node[left] {$[1]$} node[right] {$f_2$} (D1)
 	(D1) edge node[below] {$[1]$} node[above] {$f_3$} (D2)
 	(D1) edge node[above] {} (C2)
 	(C2) edge node[above] {} (A1)
 	(C2) edge node[below] {$[1]$} (D2)
 	(A2) edge node[above] {$\delta$} (C2)
 	;
 	\end{tikzpicture} 
 	\end{equation}	
The octahedral axiom ensures the existence of the ``top'' cap
 	\begin{equation} \label{eq:topcap}
 	\begin{tikzpicture}[baseline=(current  bounding  box.center), scale=1.5]
 	\node (A1) at (0,2) {$A_2$};
 	\node (A2) at (4,2) {$A_1$};
 	\node (C1) at (2, 1.5) {$\star$};
 	\node (C2) at (2,1) {$C$};
 	\node (C3) at (2, 0.5) {$\star$};
 	\node (D1) at (0,0) {$A_3$};
 	\node (D2) at (4,0) {$A_0$};
 	\path[->,font=\scriptsize]
 	 	(D2) edge node[right] {$f_0$} (A2)
 	(A2) edge node[above] {$f_1$} (A1)
 	(A1) edge node[left] {$[1]$} node[right] {$f_2$} (D1)
 	(D1) edge node[below] {$[1]$} node[above] {$f_3$} (D2)
 	(C2) edge node[above] {} (D1)
 	(A1) edge node[above] {$[1]$} (C2)
 	(D2) edge node[below] {} (C2)
 	(C2) edge node[above] {$$} (A2)
 	;
 	\end{tikzpicture} 
 	\end{equation}	
 	
 Let $\mathcal{D}$ be the category with objects labeled by integers and a single morphism $f_{rs} : r \to s$ if and only if $r \leq s$. In particular, this is the path category associated to $Q = (\mathbb{Z}, Q_1)$ where $Q_1$ consists of arrows from $a_r : r \to r + 1$. For $r \in \mathbb{Z}$, write $\bar{r}$ for the congruence class of $r$ modulo $4$.
Define the subcategories
	\begin{align}
	\mathcal{C}_{a_r} = \{A_{\bar{r}} \}
	\end{align}
For the correspondences we simply take 
\begin{align}
\cores_{(a_{r - 1}, a_r)} = \{f_r \}
\end{align}
Then the first few terms of the  motivic enhancement can be sketched as below:
 	\begin{equation} \label{eq:diagram}
 	\begin{tikzpicture}[baseline=(current  bounding  box.center), scale=1.5]
 	\node (Z) at (-.5, 0) {$\cdots$};
 	\node (A0) at (0,0) {$\cdot_0$};
 	\node (A1) at (1.5,0) {$\cdot_1$};
 	\node (A2) at (3, 0) {$\cdot_2$};
 	\node (A3) at (4.5,0) {$\cdot_3$};
 	\node (A4) at (6,0) {$\cdot_4$};
 	\node (A5) at (6.5,0) {$\cdots$};
 	\path[->,font=\scriptsize]
 	(A0) edge node[above] {$A_0$} (A1)
 	(A1) edge node[above] {$A_1$} (A2)
 	(A2) edge node[above] {$A_2$} (A3)
 	(A3) edge node[above] {$A_3$} (A4)
 	(A0) edge[bend left=30] node[above] {$B$} (A2)
 	(A1) edge[bend left=30] node[above] {$C[1]$} (A3)
 	(A2) edge[bend left=30] node[above] {$B$} (A4)
 	(A0) edge[bend right=35] node[above=6pt, left=.5pt] {$A_3[1]$} (A3)
 	(A1) edge[bend right=35] node[above=6pt, right=.5pt] {$A_0[1]$} (A4)
 	;
 	\end{tikzpicture} 
 	\end{equation}	
\end{example}

\subsection{$\mathbb{V}$-stability conditions} \label{sec:vcat}

Let us start by defining 
a certain collection of subcategories of an ind-constructible,  triangulated 
$A_\infty$-category $\mathcal{C}$ (for example, the category of twisted 
complexes of an ind-constructible $A_\infty$-category). If $\mathcal{C}^\prime$ 
is a subcategory of $\mathcal{C}$, we write 
 \begin{align}
[\mathcal{C}^\prime] = \{[A] \in K_0 (\mathcal{C}): A \in Ob (\mathcal{C}^\prime)\}
 \end{align} for the collection of $K_0 (\mathcal{C})$ classes of objects in $\mathcal{C}^\prime$. Recalling the definition of $\deg_{A,B} (\cores_{ijk})$ from Section~\ref{sec:motcor}, we say that $\Phi = (\{C_{ij}\}, \{\cores_{ijk}\})$ is an \textit{odd motivic  enhancement} of $\mathcal{D}$,  if $\deg_{A,B} (\cores_{ijk})$ is odd for all $(A, B) \in \support{\cores_{ijk}}$.  Given an odd enhancement $\Phi$, we may define the Grothendieck category $K_\Phi (\mathcal{D})$ of the enhancement as the category with objects equal to $Ob (\mathcal{D})$ and morphisms 
 \[ \Hom_{K_\Phi (\mathcal{D})} (i, j) := [\mathcal{C}_{ij}]. \]
To justify that $K_\Phi (\mathcal{D})$ is well defined, assume $[A] : i \to j$ and $[B] : j \to k$ and let $r = \deg_{A, B} \cores_{ijk}$. Then, since the support of $\cores_{ijk}$ contains $\mathcal{C}_{jk} \times \mathcal{C}_{ij}$, it follows that there is some $\delta \in \cores_{ijk} \cap \Ext^r (A, B)$ such that $\cone{\delta} \in \mathcal{C}_{ik}$. Of course, $[\cone{\delta}] = [B] - [A[r]] = [B] + [A]$ so that composition of morphisms is well defined.

For the following definition, we do not require that $\mathbb{V}$ is connected but will assume that the morphisms of $\mathbb{V}$ form an ind-constructible stack. 

\begin{definition}\label{defn:Vcoll}
	Let $\mathcal{C}$ be an ind-constructible, triangulated 
$A_\infty$-category. A  motivic  enhancement of $\mathbb{V}$ by $\Phi = 
(\{\mathcal{C}_{ij}, \epsilon_{ij}\},\{(\cores_{ijk} , \nu_{ijk})\} )$ will be called 
a \textit{$\mathbb{V}$-collection} if 
	\begin{enumerate}
		\item $\mathcal{C}_{ij} = \mathcal{C}_{ji} [1]$, 
		\item for any $E \in \mathcal{C}_{ij}$, the morphism $Id[1] : E \to E[1]$ is in the support of $\cores_{iji}$.
		\item the morphisms $\epsilon_{ij}$ induce an isomorphism $\epsilon : K_\Phi (\mathbb{V}) \to \mathbb{V}$.
	\end{enumerate}
	We will say it is a \textit{standard} $\mathbb{V}$-collection if $\cores_{ijk}$ is the pullback of $\Ext^1$ along the inclusion $Ob( \mathcal{C}_{jk}) \times Ob ( \mathcal{C}_{ij})$ to $Ob (\mathcal{C}) \times Ob (\mathcal{C})$ and $\nu_{ijk}$ is the $N$-truncated Euler characteristic with $N = 0$.
\end{definition}
\begin{remark}
Property (1) implies that the categories $\mathcal{C}_{ij}$ are stable under even shifts. Property (2) asserts that one may think of $E[1] \in \mathcal{C}_{ji}$ as the inverse of $E \in \mathcal{C}_{ij}$. 
\end{remark}
\begin{remark}
While one frequently uses the notion of equivalence between categories, and rarely the notion of isomorphism, condition (3) in Definition~\ref{defn:Vcoll} relies on the strict notion of isomorphism. Indeed, the case of a trivial groupoid, i.e. one categorically equivalent to a trivial group, is still of substantial interest as it relates directly to the purely 2d WCF.
\end{remark}
We will often shorten the notation of a $\mathbb{V}$-collection of subcategories to $\{\mathcal{C}_{ij}: i,j \in \mathbb{V} \}$. A $\mathbb{V}$-collection of subcategories $\{\mathcal{C}_{ij} : i, j \in \mathbb{V}\}$ will be the central notion behind the categorification of 2d-4d wall-crossing data. To make this precise, we must first give definitions of a central charge, Harder-Narasimhan sequence and slicings in this context. 
\begin{definition} \label{defn:charge}
A $\mathbb{V}$-\textit{central charge} is a homomorphism of groupoids $Z : \mathbb{V} \to \mathbb{C}$ where $\mathbb{C}$ is taken as the groupoid with one object whose morphisms are $\mathbb{C}$.
\end{definition}
As seen in the previous section, one may easily describe central charges in the context of $\mathbb{V}$-collections. For example, consider a homomorphism $\phi : K_0 (\mathcal{C}) \to \Lambda$, $A \subset \Lambda$ and the groupoid $\Lambda_\phi$ defined in equation \eqref{eq:deflamphi}. The motivic  enhancement $\Phi$ induced by $\phi$ is a standard $\Lambda_\phi$-collection. A generic homomorphism 
\begin{align} \label{eq:inducedcharge}
\tilde{Z} : \Lambda \to \mathbb{C}
\end{align}
will yield a $\mathbb{V}$-central charge on $\Lambda_\phi$ by composing with $\phi$.

\begin{definition} \label{defn:preslice}
	Let $\Phi := \{\mathcal{C}_{ij} : i, j \in \mathbb{V} \}$ be a $\mathbb{V}$-collection of ind-constructible subcategories of $\mathcal{C}$ and $Z : \mathbb{V} \to \mathbb{C}$ a $\mathbb{V}$-central charge. A \textit{pre-slicing} $\mathcal{P}$ of $\Phi$ is a collection of ind-constructible subcategories $\mathcal{P}_{ij} (\theta ) \subset \mathcal{C}_{ij}$ for every $i, j \in \mathbb{V}$ and $\theta \in \mathbb{R}$. These must satisfy the following conditions:

	\begin{enumerate}[label=(\roman*), ref=\thetheorem(\roman*)]
		\item \label{defn:preslice:1} For all $\theta \in \mathbb{R}$ and $i, j \in \mathbb{V}$, $\mathcal{P}_{ij} (\theta + 1) = \mathcal{P}_{ji}(\theta)[1]$,
		\item \label{defn:preslice:2} For $i, j, k, l \in \mathbb{V}$, $\theta_1 > \theta_2$,  $E_1 \in \mathcal{P}_{ij} (\theta_1)$ and $E_2 \in \mathcal{P}_{kl} (\theta_2)$ 
		\[ \Ext^0 (E_1, E_2) = 0, \]
	\end{enumerate}
\end{definition}
We will often refer to an object $E \in \mathcal{P}_{ij} (\theta)$ as \textit{semi-stable}.  

From this point on, we assume that $\mathbb{V}$ is a finite groupoid. For a standard $\mathbb{V}$-collection $\Phi = \{\mathcal{C}_{ij}: i,j \in \mathbb{V}\}$, may use the Hall category to define a subalgebra 
\begin{align} \bar{H}_{\Phi} (\mathbb{V}) \subset M_{r} ( H (\mathcal{C})) \end{align}
of $r \times r$-matrices with entries in the usual motivic Hall algebra of ${\mathcal C}$. Here $r$ is the number of objects in $\mathbb{V}$. In particular, 
\begin{align} \label{hallcatalg}
\bar{H}_\Phi (\mathbb{V}) = \left\{ \left(\sum_k [\pi^k_{ij} : X^k_{ij} \to \mathcal{C}] \cdot \mathbb{L}^k \right) \in M_{r} ( H (\mathcal{C})) : \pi^k_{ij} (X^k_{ij}) \subseteq \mathcal{C}_{ij} \right\}.
\end{align}
When the $\mathbb{V}$-collection $\Phi$ is not standard, we may continue to define $\bar{H}_\Phi(\mathbb{V})$ as an algebra whose set is given in equation \eqref{hallcatalg}, but whose multiplicative structure is different. Write $\mathfrak{g}_\Phi$ for the associated Lie algebra and observe that this is a $\mathbb{V}$-graded Lie algebra. Furthermore, a pre-slicing $\mathcal{P}$ gives rise to a collection $a_{\mathcal{P}} := \left( a ( \gamma ) \right)_{\gamma \in \Gamma_\mathbb{V}}$. This is obtained as follows: for $\gamma$ corresponding to a morphism $\bar{\gamma}$ from $i$ to $j$, take $a(\gamma ) = \epsilon^{-1}_{ij} (\bar{\gamma}) \cap \mathcal{P}_{ij}$.  

\begin{definition}
A central charge $Z$ satisfies the support property relative to the pre-slicing $\mathcal{P}$ if it $(Z, a_\mathcal{P})$ satisfies the support property.
\end{definition}

The notion of a Harder-Narasimhan sequence for $\mathbb{V}$-collections contains slightly more information than the classical case.
\begin{definition} \label{defn:hn}
Let $\Phi := \{\mathcal{C}_{ij} : i, j \in \mathbb{V} \}$ be a 
$\mathbb{V}$-collection of 
 subcategories of $\mathcal{C}$, $\mathcal{P}$ a pre-slicing 
and $E \in \mathcal{C}_{ij}$ with $\epsilon_{ij} (E) = \gamma$. A 
\textit{Harder-Narasimhan sequence} for $E$ is a sequence of objects $i = i_0 , 
\ldots, i_n = j$ in $\mathbb{V}$, a sequence of objects $A_k \in 
\mathcal{C}_{i_{k }i_{k - 1}} (\theta_k)$ for which 
\begin{enumerate}
	\item $\theta_1 > \theta_2 > \cdots > \theta_k$,
	\item If $\epsilon_{i_{k}i_{k - 1}} (A_k) = \gamma_k$ then the composition,
	\begin{equation} \label{eq:sequence}
	\begin{tikzpicture}[baseline=(current  bounding  box.center), scale=1.5]
	\node (A) at (0,0) {$i = i_n$};
	\node (B) at (1,0) {$i_{n - 1}$};
	\node (C) at (2, 0) {$\cdots$};
	\node (D) at (3,0) {$i_{1}$};
	\node (E) at (4,0) {$i_0 = j$};
	\path[->,font=\scriptsize]
	(A) edge node[above] {$\gamma_n$} (B)
	(B) edge node[above] {$\gamma_{n - 1}$} (C)
	(C) edge node[above] {$\gamma_{2}$} (D)
	(D) edge node[above] {$\gamma_1$} (E);
	\end{tikzpicture} 
	\end{equation}	
	is $\gamma=\gamma_1+...+\gamma_k$.
	\item There are maps $\delta_k \in \Ext^1 (A_k, E_{k - 1}) \cap \cores_{i_{0} i_{k -1} i_{k }}$ yielding the convolution
	\begin{equation} \label{eq:sequence}
	\begin{tikzpicture}[baseline=(current  bounding  box.center), scale=1.7]
	\node (A) at (-.4,0) {$0 = $};
	\node (B) at (0,0) {$E_0$};
	\node (C) at (1, 0) {$E_1$};
	\node (D) at (2,0) {$E_2$};
	\node (Z) at (3, 0) {$\ldots$};
	\node (E) at (4,0) {$E_{n - 1}$};
	\node (F) at (5,0) {$E_{n}$};
	\node (G) at (5.4, 0) {$= E$};
	\node (C1) at (0.5, -.6) {$A_1$};
	\node (D1) at (1.5,-.6) {$A_2$};
	\node (F1) at (4.5, -.6) {$A_{n}$};
	\path[->,font=\scriptsize]
	(B) edge node[above] {} (C)
	(C) edge node[above] {} (D)
	(D) edge node[above] {} (Z)
	(Z) edge node[above] {} (E)
	(E) edge node[above] {} (F)
	(C) edge node[above] {} (C1)
	(D) edge node[above] {} (D1)
	(F) edge node[above] {} (F1);
	\path[dashed,->,font=\scriptsize]
	(C1) edge node[below=6pt, left] {$\delta_1$} (B)
	(D1) edge node[below=6pt, left] {$\delta_2$} (C)
	(F1) edge node[below=6pt, left] {$\delta_n$} (E);
	\end{tikzpicture} 
	\end{equation}		
\end{enumerate}
Given a Harder-Narasimhan sequence as above, we call the sequence $i = i_0 , 
\ldots, i_n = j$ the \textit{path} 
of the sequence.
\end{definition}
We note that in the $\mathbb{V}$-collection context, this condition translates to the existence of a special type of the usual Harder-Narasimhan sequence for $E$ in terms of semi-stable objects lying over the morphisms of $\mathbb{V}$. Now we may define a stability condition for the $\mathbb{V}$-collection.
\begin{definition} \label{defn:stabcond}
 Given a $\mathbb{V}$-collection of ind-constructible subcategories $\Phi := 
\{\mathcal{C}_{ij} : i, j \in \mathbb{V} \}$ of $\mathcal{C}$, a 
$\mathbb{V}$-stability condition $(Z, \mathcal{P})$ consists of a 
$\mathbb{V}$-central charge $Z$ and a pre-slicing $\mathcal{P}$ such that
 \begin{enumerate}
 	\item $Z$ satisfies the support property relative to $\mathcal{P}$,
 	\item $E \in \mathcal{P}_{ij} (\theta )$ then $\arg ( Z (\phi_{ij}([E])) ) \equiv \theta \pmod{2\pi} $,
 	\item every non-zero $E \in \mathcal{C}_{ij}$ has a Harder-Narasimhan sequence.
 \end{enumerate} 
\end{definition}
Observe that if $\mathbb{V}$ is the category with one object and one morphism 
and 
$\mathcal{C}_{ii} = \mathcal{C}$ along with the standard weight $\nu$, then a 
$\mathbb{V}$-stability condition reduces to the generalization of the 
Bridgeland stability conditions defined in \cite{ks08} (the 
constructibility and the important Support Property were added to the 
axiomatics of \cite{bridgeland}). Using the topology on stability data for $\mathfrak{g}_\Phi$, we obtain a topology on the space of $\mathbb{V}$-stability conditions. We also retain a version of uniqueness of 
Harder-Narasimhan sequences in this  context.
\begin{proposition} \label{prop:uniquehn}
Given a $\mathbb{V}$-stability condition on $\Phi$, $E \in \mathcal{C}_{ij}$ and a sequence $i = i_0 , \ldots, i_n = j$ of objects in $\mathbb{V}$, there is at most one Harder-Narasimhan sequence $A_1, \ldots, A_n$ of $E$ with $A_k \in \mathcal{C}_{i_{k -1 } i_k}$. Furthermore, if $B_1, \ldots, B_m$ is another Harder-Narasimhan sequence with $B_k \in \mathcal{C}_{\tilde{i}_{k - 1} \tilde{i}_k}$ then $m = n$ and $B_k = A_k$.
\end{proposition}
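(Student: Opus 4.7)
The plan is to adapt Bridgeland's uniqueness argument for Harder-Narasimhan filtrations to the $\mathbb{V}$-collection setting, with the key observation that the pre-slicing vanishing in \ref{defn:preslice:2} holds uniformly across all groupoid indices. First I would establish the following d\'evissage lemma: \emph{if $F$ is semi-stable of phase $\theta$ and $G \in \mathcal{C}$ admits an HN sequence whose sub-quotients $A_k$ have phases $\theta_k$, then $\theta > \max_k \theta_k$ implies $\Hom_\mathcal{C}(F, G) = 0$}. This follows by inducting along the triangles $E_{k-1} \to E_k \to A_k \to E_{k-1}[1]$ and applying $\Hom_\mathcal{C}(F, -)$; at each step the contribution $\Hom_\mathcal{C}(F, A_k) = 0$ is exactly the pre-slicing axiom, which is indifferent to the $\mathbb{V}$-indices attached to $F$ and $A_k$.

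Next, comparing two HN sequences $(A_k)_{k=1}^n$ and $(B_k)_{k=1}^m$ of $E$: applying the lemma to the inclusion $A_1 \to E$ together with the filtration of $E$ by the $B_k$ forces $\theta_1^A \le \theta_1^B$, and symmetrically, so $\theta_1^A = \theta_1^B =: \theta_1$. Composing $A_1 \to E$ with $E \to \cone{B_1 \to E}$ produces a map whose target has HN pieces of phase $< \theta_1$, so by the lemma the composition vanishes and $A_1 \to E$ factors through $B_1$, yielding $f : A_1 \to B_1$; symmetrically one obtains $g : B_1 \to A_1$. To show $g f = \mathrm{id}_{A_1}$, set $C = \cone{i_A}$ with $i_A : A_1 \to E$ the HN inclusion; then $g f - \mathrm{id}$ lies in the kernel of $(i_A)_* : \Hom_\mathcal{C}(A_1, A_1) \to \Hom_\mathcal{C}(A_1, E)$, which is the image of $\Hom_\mathcal{C}(A_1, C[-1])$. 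But $C[-1]$ has HN sub-quotients $A_k[-1]$ of phase $\theta_k - 1 < \theta_1$, so the lemma forces $\Hom_\mathcal{C}(A_1, C[-1]) = 0$; hence $g f = \mathrm{id}_{A_1}$, and symmetrically $f g = \mathrm{id}_{B_1}$.

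Having identified the first pieces $A_1 \cong B_1$ and hence the first HN triangle, induction on $n$ applied to $C \cong \cone{B_1 \to E}$ with shortened filtrations $(A_k)_{k \ge 2}$ and $(B_k)_{k \ge 2}$ gives $n = m$ and $A_k \cong B_k$ for every $k$. The path is then reconstructed from the sequence of sub-quotients via $\gamma_k := \epsilon_{i_{k-1} i_k}(A_k)$, which as a morphism of $\mathbb{V}$ determines its own source $i_{k-1}$ and target $i_k$; hence uniqueness of the $A_k$ implies uniqueness of the path, and the first claim is the specialization to a fixed path. The main obstacle is the cone/octahedral bookkeeping needed to carry out Bridgeland's triangulated manipulations in this relative setting; however, because the pre-slicing vanishing is imposed globally across $\mathbb{V}$ and the definition of HN sequence is phrased in terms of distinguished triangles of $\mathcal{C}$, no additional input beyond the classical triangulated argument is required.
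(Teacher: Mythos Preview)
Your proposal is correct and follows essentially the same approach as the paper: the paper's proof consists of a one-line appeal to Bridgeland's uniqueness argument for Harder--Narasimhan sequences, noting that it applies verbatim to the pre-slicing $\mathcal{P}_{ij}$, while you have carefully written out that argument and made explicit the one point that makes it go through in the $\mathbb{V}$-setting, namely that the vanishing in \ref{defn:preslice:2} is uniform across groupoid indices.
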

\begin{proof}
	This follows from the uniqueness argument of Harder-Narasimhan sequences for slicings as in \cite[Section~3]{bridgeland} applied to the pre-slicing $\mathcal{P}_{ij}$.
\end{proof}
Proposition \ref{prop:uniquehn} allows us to define the following constants.
\begin{definition}
	Given a $\mathbb{V}$-stability condition $(Z, \mathcal{P})$ on $\Phi$, $E \in \mathcal{C}_{ij}$, let $\theta^- (E) = \theta_1$ and $\theta^+ (E) = \theta_n$ if there exists a Harder-Narasimhan sequence $A_1, \ldots, A_n$ of $E$ such that $A_1 \in \mathcal{P}_{ii_1} (\theta_1)$ and $A_n \in \mathcal{P}_{i_{n - 1} j} (\theta_n)$.
\end{definition}

\subsection{Motivic 2d-4d wall-crossing formulas}

Let us now describe the wall-crossing formulas associated with the standard 
$\mathbb{V}$-stability collection.

Let $I = [\theta_1, \theta_2)$ be any half open interval in $\mathbb{R}$. 
For any sequence of objects $\mathbf{s} = (i_0, \ldots, i_r)$ in $\mathbb{V}$, 
define the ind-constructible category 
\begin{align*}
\mathcal{P}_{(i_0, \ldots, i_r)} (I) := \left\{ E \in \mathcal{C}_{i_0i_r} : \theta^\pm (E) \in I, E\text{ has a HN sequence with path }\mathbf{s} \right\}.
\end{align*}
Let $\mathcal{S}_{ij} = \cup_{n \in \mathbb{N}} \{i\} \times \mathbb{V}^n \times \{j\}$ be the set of all paths from $i$ to $j$. Using Proposition~\ref{prop:uniquehn} once again along with the finiteness of $\mathbb{V}$, we may define the element 
\begin{align}
A_{ij} (I) = \sum_{\mathbf{s} \in \mathcal{S}_{ij}} [\mathcal{P}_{\mathbf{s}} (I) \to \mathcal{C}_{ij}].
\end{align}
for every $i,j \in \mathbb{V}$. Clearly $A_{ij} (I)$ is an element of the motivic Hall algebra of $\mathcal{C}_{ij}$. Assembling these elements into a matrix gives the element
\begin{align}
A_I = \left( A_{ij} (I) \right) \in \bar{H}_\Phi (\mathbb{V}).
\end{align}
This brings us to the motivic Hall category wall-crossing formula.
\begin{theorem} \label{thm:wcfhall}
 Given a standard $\mathbb{V}$-collection of subcategories $\Phi := \{\mathcal{C}_{ij} : i, j \in \mathbb{V} \}$ of $\mathcal{C}$ and a $\mathbb{V}$-stability condition $(Z, \mathcal{P})$, if  $\theta_1 < \theta_2 < \theta_3$ then
 \begin{align}
 A_{[\theta_1, \theta_2)} \cdot A_{[\theta_2, \theta_3)} = A_{[\theta_1, \theta_3)}.
 \end{align}
\end{theorem}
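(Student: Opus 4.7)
The proof is entrywise in the matrix algebra $\bar{H}_\Phi(\mathbb{V})$. For fixed $i,j \in Ob(\mathbb{V})$, the $(i,j)$-entry of the left-hand side is
\begin{align*}
\sum_{k \in Ob(\mathbb{V})} A_{ik}([\theta_1,\theta_2)) \cdot A_{kj}([\theta_2,\theta_3)),
\end{align*}
where the product is the motivic Hall algebra product of $\mathcal{C}$ via the standard correspondence $(\mathcal{E}xt^1, \nu_0)$. Expanding each factor using the definition $A_{i?}(I) = \sum_{\mathbf{s}} [\mathcal{P}_{\mathbf{s}}(I) \to \mathcal{C}_{i?}]$, the $(i,j)$-entry is parameterized by the ind-constructible stack of triples $(F, G, \delta)$ where $F \in \mathcal{P}_{\mathbf{s}'}([\theta_1, \theta_2))$ and $G \in \mathcal{P}_{\mathbf{s}''}([\theta_2, \theta_3))$ lie in the path-subcategories over composable paths $\mathbf{s}'$, $\mathbf{s}''$ through some intermediate vertex $k$, and $\delta$ is the Ext-class that the Hall product integrates against to form the cone in $\mathcal{C}_{ij}$. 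The $(i,j)$-entry of the right-hand side is $\sum_{\mathbf{s} \in \mathcal{S}_{ij}} [\mathcal{P}_{\mathbf{s}}([\theta_1, \theta_3)) \to \mathcal{C}_{ij}]$.

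The plan is to construct a constructible bijection between these two parameter stacks. In the forward direction, let $E \in \mathcal{P}_{\mathbf{s}}([\theta_1, \theta_3))$ be an object with its unique Harder-Narasimhan sequence $A_1, \ldots, A_n$ (Proposition~\ref{prop:uniquehn}), whose phases $\theta_1^{HN} > \cdots > \theta_n^{HN}$ lie in $[\theta_1, \theta_3)$. Since the $\theta_k^{HN}$ are strictly monotone and the interval splits as $[\theta_1, \theta_2) \sqcup [\theta_2, \theta_3)$, there is a unique index $p$ such that $\theta_1^{HN}, \ldots, \theta_p^{HN} \in [\theta_2, \theta_3)$ and $\theta_{p+1}^{HN}, \ldots, \theta_n^{HN} \in [\theta_1, \theta_2)$. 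Cutting the HN filtration at the $p$-th step produces the intermediate object $E_p$ together with the distinguished triangle $E_p \to E \to C \to E_p[1]$. Both $E_p$ and $C$ inherit HN sequences lying entirely in the expected phase range and on the appropriate subpaths of $\mathbf{s}$, so they contribute to the two factors of the left-hand side, and the connecting map supplies the Ext datum $\delta$. Conversely, given a triple $(F, G, \delta)$ on the left, form the cone $C$; the orthogonality axiom \ref{defn:preslice:2} applied to the strictly separated phases ensures that the concatenation of the HN sequence of $F$ with that of $G$ (joined along $\delta$ at the appropriate step) is a valid HN sequence for $C$, so $C$ lies in $\mathcal{P}_{\mathbf{s}' \cdot \mathbf{s}''}([\theta_1, \theta_3))$.

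Matching of motivic classes reduces to two points: the splitting index $p$ varies constructibly in families (using ind-constructibility of $\mathcal{P}_{ij}(\theta)$), and the $\mathbb{L}^{-n}$-weights coming from the truncated Euler characteristic $\nu_0$ in the Hall product match the (trivial) weighting on the right. The latter holds because the cone construction and the Ext-fiber integration on both sides reproduce the same object together with the same $\Ext^{\le 0}(F, G)$-torsor structure, so the weights cancel correctly.

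The main obstacle is the constructible refinement of the set-theoretic bijection and the bookkeeping of the $\mathbb{V}$-paths: one must ensure that the cutting-and-gluing procedure above is compatible with the decomposition of $\bar{H}_\Phi(\mathbb{V})$ into matrix entries indexed by morphisms of $\mathbb{V}$, which in turn rests on the $\mathbb{V}$-collection axioms that composition in $\mathbb{V}$ corresponds to composition of cones in $\mathcal{C}$. Once this is in place, the argument is a direct transcription of Bridgeland's multiplicativity proof for slicings (\cite{bridgeland}), carried out at the level of motivic classes and indexed by $\mathbb{V}$.
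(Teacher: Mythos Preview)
Your proposal is correct and follows essentially the same approach as the paper: the paper's proof is the one-line remark that the theorem ``follows at once from the motivic wall-crossing formula \cite{ks08} along a concatenated path, and then summed over all paths,'' and what you have written is precisely an unpacking of that sentence---the HN cutting/gluing argument entrywise along each concatenated $\mathbb{V}$-path, assembled by summing over the intermediate vertex $k$ and over all paths. The only difference is that the paper defers the cut-and-glue bijection and the motivic bookkeeping to \cite{ks08}, whereas you reproduce that argument directly.
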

The proof of this theorem follows at once from the  motivic 
wall-crossing formula \cite{ks08} along a concatenated path, 
and then summed over all paths.

Let now $\mathcal{C}$ be an ind-constructible locally Artin 
polarized $3$-dimensional Calabi-Yau category endowed with constructible 
stability structure (see \cite{ks08} for the terminology). It is explained in 
the loc.cit. that in this case for each strict sector $V$ of the plane there is 
a homomorphism $\phi$ from the motivic Hall algebra of the category 
$\mathcal{C}_V$ generated by semistables with the central charge in $V$ to the 
completed motivic quantum torus $\mathcal{R}_V$. The latter is an algebra over 
the commutative ring $Mot(pt)$ of motivic functions on the point, with added 
inverse to the motives of all groups $GL(n), n\ge 1$. Polarization means a
constructible homomorphism of abelian groups $cl: K_0(\mathcal{C})\to \Gamma$, 
where $\Gamma$ is a free abelian group endowed with skew-symmetric integer 
bilinear form $\langle,\rangle$ and the homomorphism $cl$ is compatible with 
this form and the Euler form on $K_0(\mathcal{C})$. The algebra
$\mathcal{R}_V$ is generated by the generators $e_\gamma$ such that 
$Z(\gamma)\in V, Q(\gamma)\ge 0$ (see Support Property for the 
discussion of the quadratic form $Q$) subject to the relations
$$e_{\gamma_1}e_{\gamma_1}=\mathbb{L}^{{{1}\over{2}}\langle 
\gamma_1,\gamma_2\rangle}e_{\gamma_1+\gamma_2},$$
where $\mathbb{L}$ is the motive of the affine line.
The strict sector $V$ determines the interval $I$ as above.
Applying homomorphism $\phi$ to the above $2d-4d$ wall-crossing formula,
we obtain the same factorization in the matrix algebra 
$M_r(\mathcal{R}_V):=M_r(\mathcal{R}_I)$
with entries which are elements of the motivic quantum torus.
This formula is called {\it motivic $2d-4d$ wall-crossing formula}.
Similarly to the \cite{ks08} it is equivalent to the factorization of $A_I$ 
into the clockwise product of $A_l$, where $l\subset V$ is a ray, and the 
corresponding element $A_l$ is the sum of motivic classes of all 
$\mathbb{V}$-semistable objects with the central charge belonging to $l$.
In order to recover the wall-crossing formula for numerical DT-invariants
one should assume the ``absence of poles conjecture'' from the loc. cit. 

For a geometrically minded reader and in order to make a link with 
\cite{gmn12} let us provide  a more geometric intuitive 
picture  of the numerical Donaldson-Thomas invariants in the categorical 
framework.

For that we assume that, given a generic stability condition 
$(Z, \mathcal{P})$ on the ind-constructible $A_\infty$-triangulated category 
$H^0 (\mathcal{C})$, and a class $\lambda \in K_0 (\mathcal{C})$ there is a 
moduli stack $\mathcal{M}^{ss}_\lambda$ of isomorphism classes of semi-stable 
objects with $K_0$ class $\lambda$ which is Artin and of finite type. We assume 
that there are sub-stacks $\mathcal{M}_\lambda^{ss} (0)$ which carry a virtual 
fundamental class of virtual dimension zero. The associated function 
$\Omega_{(Z, \mathcal{P})}: K_0 (\mathcal{C}) \to \mathbb{Z}$ given by 
$\Omega_{(Z, \mathcal{P})} (\lambda) = \# \mathcal{M}_\lambda^{ss} (0)$ will be 
called the DT invariant of $(Z, \mathcal{P})$.  Here $\Omega_{(Z, \mathcal{P})} 
(\lambda)$ is meant to count the ``number of semi-stable objects whose $K_0$ 
class 
equals $\lambda$''. We do not discuss here the actual meaning of these 
words, since for now, questions of when and how such a 
function is defined 
will be irrelevant. We will say that a sub-category $i: \mathcal{B} 
\hookrightarrow \mathcal{C}$ is algebraic if the moduli stack 
$\mathcal{M}^{ss}_\lambda$ in $\mathcal{B}$ is an algebraic substack of 
$\mathcal{M}^{ss}_{i_* \lambda}$. For full algebraic subcategories $\mathcal{B} 
\subset \mathcal{C}$, one may restrict the count of $\mathcal{M}^{ss}_\lambda$ 
to the substack of objects in $\mathcal{B}$. We write this function as 
$\Omega_{(Z,\mathcal{P})}^{\mathcal{B}}$.

Now assume that  a generic stability condition $(Z, \mathcal{P})$ on 
$\mathcal{C}$ 
has been chosen. Given an algebraic $\mathbb{V}$-collection of subcategories 
$\{\mathcal{C}_{ij}: i, j \in \mathbb{V}\}$ of $\mathcal{C}$, we define a 2d-4d 
wall-crossing data as follows. Let $\Gamma = K_0 (\mathcal{C}) \otimes 
\mathbb{Z}$ and the pairing $\left<, \right>$ equal to the anti-symmetrization 
of the Euler form on $\Gamma$. Take $\Gamma_{ij} = K_0 (\mathcal{C}_{ij}) 
\otimes \mathbb{Z}$ with the same pairing. Note that in our setting, we need not 
have that $\Gamma_{ij}$ is a torsor over $\Gamma$. Nonetheless, we may restrict 
$Z : \Gamma \to \mathbb{C}$ to $\Gamma_{ij}$ to obtain a central charge $Z : 
\mathbb{V} \to \mathbb{C}$. Take $\Omega = \Omega^{\mathcal{C}}_{(Z, 
\mathcal{P})}$ and $\mu : \Gamma_{ij} \to \mathbb{Z}$ as $\mu = 
\Omega^{\mathcal{C}_{ij}}_{(Z, \mathcal{P})} \circ \phi_{ij}^{-1}$. Finally, for 
$\sigma$ introduced in Section~\ref{sec:vcat}, we take $\sigma ([B], [C]) = 
(-1)^{\left<B, C \right>}$. This completes the description of numerical 
DT-invariants. 

\section{Examples from algebra} We will now give examples of motivic correspondences, enhancements and $\mathbb{V}$-collections in the algebraic setting.  We will explore $\mathbb{V}$-stability conditions arising in an elementary algebraic case.

\subsection{Algebraic motivic enhancements} The first set of examples explores a categorification for a pre-triangulated dg-category $\mathcal{A}$. 
\begin{example} \label{ex:mainalgebra}
	Let $\mathcal{A}$ be the category of twisted complexes over a given dg-category $\mathcal{C}$ and let $\mathbb{V}$ be a trivial connected groupoid, i.e. every pair of objects in $\mathbb{V}$ has a unique invertible morphism between them. Assume the  set of objects in $\mathbb{V}$ is labeled by a set $\mathcal{V}$ of objects in $\mathcal{C}$.  Given any $X,Y \in \mathbb{V}$, let $\mathcal{A}_{XY}$ be the full subcategory of twisted complexes of the form  $C_g := (Y \oplus X[-1], d_g)$ where 
	\begin{align}
	 d_g = \left[ \begin{matrix} 0 & 0 \\ g & 0 \end{matrix}
	 \right]
	\end{align}
	for some zero degree cocycle $g: Y \to X$. In other words, $\mathcal{A}_{XY}$ are all cones of morphisms from $Y$ to $X$ (shifted by $-1$). Noting that $[\mathcal{A}_{XY}] = \{[Y] - [X]\}$ and $\Gamma_{XY}$ are both one element sets, the maps $\phi_{XY} : [\mathcal{A}_{XY}] \to \Gamma_{XY}$ are uniquely determined.
	
	Given objects $C_g$ and $C_h$ lying over two composable morphisms
	\begin{align}
	X \stackrel{C_g}{\longrightarrow} Y \stackrel{C_h}{\longrightarrow} Z
	\end{align}
	Take $f_{C_g C_h} : C_g \to C_h$ to be the map given by 
	\begin{equation} \label{eq:birmap}
	\begin{tikzpicture}[baseline=(current  bounding  box.center), scale=1.5]
	\node (A) at (0,0) {$X[-1]$};
	\node (B) at (0,-1) {$Y$};
	\node (A1) at (0, -.5) {$\oplus$};
	\node (C) at (1,0) {$Y[-1]$};
	\node (D) at (1,-1) {$Z$};
	\node (C1) at (1, -.5) {$\oplus$};
	\path[->,font=\scriptsize]
	(B) edge node[below=6pt,right=.1pt] {$=$} (C);
	\end{tikzpicture} 
	\end{equation}	
It is clear that $\cone{f_{C_gC_h}}$ is quasi-isomorphic to $C_{gh} \in \mathcal{A}_{XZ}$. Associativity is also immediate from this observation.
\end{example}
Our next example is a generalization of the last one to non-trivial groupoids.

\begin{example} \label{ex:mainsubcat}
	As in the previous example, let $\mathcal{A}$ be the category of twisted complexes over a given dg-category $\mathcal{C}$ with a distinguished set of objects $\mathcal{V}$ in $\mathcal{C}$. Take $\mathcal{B} \subseteq \mathcal{A}$ to be a full pre-triangulated subcategory, $\Gamma$ its Grothendieck group $K_0 (\mathcal{B})$. and $\tilde{\Gamma}$ the Grothendieck group $K_0 (\mathcal{A})$ of $\mathcal{A}$. Define a formal element $o$ and take $\mathbb{V}$ to be the groupoid with objects labeled by $\mathcal{V} \sqcup \{o\}$. Map the additional element $o$ to $\tilde{\Gamma}$ by taking $[o] = 0 \in \tilde{\Gamma}$. For every pair $X, Y \in \mathcal{V} \sqcup \{o\}$ define the set of morphisms in $\mathbb{V}$ from $X$ to $Y$ to be the coset 
	\begin{align}
	\Gamma_{XY} = [Y] - [X] + \Gamma \subset \tilde{\Gamma}.
	\end{align}
	Composition is simply addition in $\tilde{\Gamma}$.
	
	For any $X \in \mathcal{V}$, let $\mathcal{B}_X$ be the full subcategory of objects which are quasi-equivalent to cones $\cone{f}[-1]$ or $\cone{f^\prime}$ where $f : X \to B$ or $f^\prime : B \to X$ for some $B \in \mathcal{B}$. Also take $\mathcal{B}_o := \mathcal{B}$. Then, for any $X, Y \in \mathcal{V} \sqcup \{o\}$, take $\mathcal{A}_{XY}$ to be the full subcategory of all objects $C_g := \cone{g}[-1]$ for morphisms $g : \tilde{Y} \to \tilde{X}$ where $\tilde{Y} \in \mathcal{B}_Y$ and $\tilde{X} \in \mathcal{B}_X$. Noting that any object $\tilde{X} \in \mathcal{B}_X$ has $K_0 (\mathcal{A})$ class lying in the coset $[X] + \Gamma$, and similarly with $\tilde{Y}$, we have that $[C_g] = [\tilde{Y}] - [\tilde{X}] \in [Y] - [X] + \Gamma$ yielding a well defined map $\phi_{XY} : [\mathcal{A}_{XY}] \to \Gamma_{XY}$. The morphism $f_{C_g C_h}$ is as in Example~\ref{ex:mainalgebra}. 
\end{example}
There are several variants that can be considered of examples~\ref{ex:mainalgebra} and \ref{ex:mainsubcat}. For instance, one could consider cones of any even degree morphism instead of simply zero degree morphisms. Alternatively, in Example~\ref{ex:mainsubcat}, one could restrict to shifts of cones from $\mathcal{B}$ to $X$ or to cones from $X$ to $\mathcal{B}$.  We now examine a $\mathbb{V}$-subcollection of  $\mathcal{C}_\mathcal{V}$-collection as in Example~\ref{ex:mainalgebra}.

\begin{figure}[h]
	\begin{equation*}
	\begin{tikzpicture}[baseline=(current  bounding  box.center), scale=1.8]
	\node (A) at (0,0) {$0$};
	\node (B) at (2,0) {$1$};
	\node (C) at (4,0) {$2$};
	\path[->,font=\scriptsize]
	(B) edge[bend right=10] node[above] {$M_{01}[2k + 1]$} (A)
	(C) edge[bend right=10] node[above] {$M_{12}[2k + 1]$} (B)
	(A) edge[bend right=32] node[below] {$M_{02}[2k]$} (C)
	(C) edge[bend right=32] node[above] {$M_{02}[2k + 1]$} (A)
	(B) edge[bend right=10] node[below] {$M_{12}[2k]$} (C)
	(A) edge[bend right=10] node[below] {$M_{01}[2k]$} (B);
	\end{tikzpicture}
	\end{equation*}
	\caption{\label{fig:A2i} Indecomposables in $\mathcal{C}_{ij}$}
\end{figure}
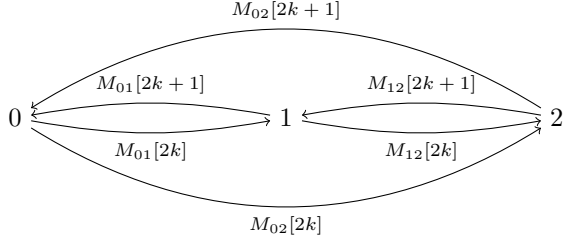

\begin{example} \label{ex:An4} Consider the $A_n$-category as in Examples \ref{ex:An1} and \ref{ex:An2}. Take $\mathbb{V} = \mathbb{Z}^{n + 1}_{\phi, A}$ to be the trivial connected quiver with objects $\{0, 1, \ldots, n\}$ and $\{\mathcal{C}_{ij}\}$ the induced $\mathbb{V}$-collection of subcategories given in  \eqref{eq:mot2cat}. In particular, take the standard motivic correspondence induced by $\phi$ and $A$ as in Definition~\ref{defn:phimotext}. 

Now let $\tilde{Z} : \mathbb{Z}^{n + 1} \to \mathbb{C}$ be a homomorphism so that, by composing with $\phi$, we obtain the central charge $Z = \phi \circ \tilde{Z}$ which, in this case, simply assigns the unique morphism $i \to j$ to $\lambda_{ij} := \tilde{Z} (e_j - e_i)$. Assume no three points in $\{\tilde{Z} (e_i)\}$ are collinear. As we observe in the next section, for $n = 2$, the space of stability conditions can be described concretely.
\end{example}

\subsection{$\Phi$-stability conditions for $A_2$-representations}
Next we consider the elementary, but rich, example of a $\mathbb{V}$-stability condition for the category of $A_2$-representations. 
	
	\label{ex:An3} Consider the $A_2$-category $\mathcal{A}$ as in Examples \ref{ex:An1} and \ref{ex:An2}. Then $\mathbb{V}$ is a trivial groupoid with objects $\{0,1,2\}$. Restricting to the indecomposable objects leaves us with the representation illustrated in Figure~\ref{fig:A2i} of the motivic  enhancement over $\mathbb{V}$. The labels over the morphisms in $\mathbb{V}$ indicate all objects lying in $\mathcal{C}_{ij}$ which are even or odd translations of the indicated indecomposable representations. 
Any function $f : \{0,1,2\} \to \mathbb{C}$ induces a homomorphism $\tilde{Z} : \mathbb{R}^3 \to \mathbb{C}$ which, in turn, yields the central charge $Z_f : K_0 (\mathcal{A}) \to \mathbb{C}$ induced by $\phi$ as in \eqref{eq:inducedcharge}. Note that $Z_f$ is uniquely determined up to a translation of $f$. For $Z_f$ to be generic, we must have that $f(i) \ne f(j)$ for all $i \ne j$. 
Let 
\[P := \frac{\{f : f(0), f(1), f(2) \text{ distinct}\}}{\text{translations}}  \] 
be the set of such central charges. One notes that $P$ is isomorphic to $P_1 \times \mathbb{C}^*$ where $P_1$ is a pair of pants (i.e. $\mathbb{C} \backslash \{0, 1\}$). To obtain a pre-slicing, we must first choose an argument $\theta_{ij}$ of $f(j) - f(i)$ for $j > i$. Such a choice yields an abelian cover $\tilde{P}$ of $P$ as well as a map $\Theta : \tilde{P} \to \mathbb{R}^3$. For a fixed central charge, the determination of a stability condition is purely dependent on the choice of the arguments (but not necessarily determined uniquely), so it remains to find the possible pre-slicings for any given element $(\theta_{01}, \theta_{12}, \theta_{02})  \in \Theta (\tilde{P})$. 

\begin{figure}[b]
	\begin{equation*}
	\begin{tikzpicture}[cross line/.style={preaction={draw=white, -, line width=6pt}}]
	\node[anchor=south west,inner sep=0] (image) at (0,0) {\includegraphics[scale=.6]{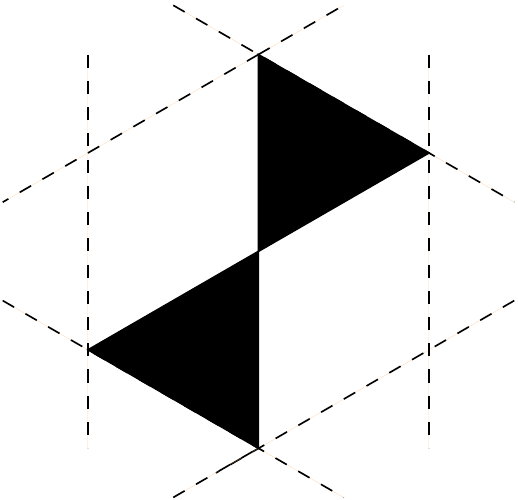}};
	\node[align=center] at (3,3) {\tiny $\alpha_1 = 1$};
	\node[align=center] at (0,.2) {\tiny $\alpha_1 = -1$};
	\node[align=center] at (1,3.2) {\tiny $\alpha_3 = 1$};
	\node[align=center] at (2.2,-.2) {\tiny $\alpha_3 = -1$};
	\node[align=center] at (3.7,1.2) {\tiny $\alpha_2 = -1$};
	\node[align=center] at (-.5,1.7) {\tiny $\alpha_2 = 1$};
	\end{tikzpicture} 
	\end{equation*}
	\caption{\label{fig:A2iii}Regions in the projection of the cube in $\Theta (\tilde{P})$.}
\end{figure}
Note that rotating $f$ by $e^{2\pi i \theta}$ results in adding $(\theta, \theta, \theta)$ to $(\theta_{01}, \theta_{12}, \theta_{02})$ and results in no effect on the constraints in the pre-slicing (nor, as we will show in a moment, the existence of Harder-Narasimhan sequences). Thus we may find $\Theta (\tilde{P})$ by considering its intersection with (or projection to) the plane
\[ V = \{(\theta_{01}, \theta_{12}, \theta_{02}) : \theta_{01} + \theta_{12} + \theta_{02} = 0 \}. \] 
Take $\alpha_1 = \theta_{02} - \theta_{01} $, $\alpha_2 = \theta_{12} - \theta_{02}$ and $\alpha_{3} = \theta_{12} - \theta_{01}$ to be elements of $V$ and note that these form the positive roots of the $A_2$-root system. 
To describe possible stability conditions, we first identify the regions in the plane $V$ where there exists an $f$ yielding $(\theta_{01}, \theta_{12}, \theta_{02})$. First note that in the ambient $\mathbb{R}^3$, we may restrict to the region $[-1,1]^3$ (as translating by an even integral lattice element results in a distinct choice of arguments for the same $f$) and that, upon projecting to $V$, we obtain a hexagonal region bounded by the inequalities $ | \alpha_i | \leq 1$ for all $i$. The existence of an $f$ then implies that either $\alpha_1 > 0$ and $\alpha_2 > 0$ or $\alpha_1 < 0$ and $\alpha_2 < 0$. These regions are depicted in Figure~\ref{fig:A2iii}.

\begin{figure}[t]
	\begin{equation*}
	\begin{tikzpicture}[baseline=(current  bounding  box.center), scale=1]
	\node (R1) at (-2,4) {Case I)};
	\node (R2) at (-2,2) {Case II)};
	\node (R3) at (-2,0) {Case III)};
	\node (A1) at (0,4) {$0$};
	\node (B1) at (2,4) {$1$};
	\node (C1) at (4,4) {$2$};
	\node (Z1) at (6,4) {$\Rightarrow$};
	\node (D1) at (8,4) {$\alpha_1 \geq 0$};
	\node (A2) at (0,2) {$0$};
	\node (B2) at (2,2) {$1$};
	\node (C2) at (4,2) {$2$};
	\node (Z2) at (6,2) {$\Rightarrow$};
	\node (D2) at (8,2) {$\alpha_2 \geq 0$};
	\node (A3) at (0,0) {$0$};
	\node (B3) at (2,0) {$1$};
	\node (C3) at (4,0) {$2$};
	\node (Z3) at (6,0) {$\Rightarrow$};
	\node (D3) at (8,0) {$1 \geq \alpha_3$};
	\path[->,font=\scriptsize]
	(B1) edge[bend right=10] node[above] {} (A1)
	(A1) edge[bend right=32] node[below] {} (C1)
	(C1) edge[bend right=32] node[above] {} (A1)
	(A1) edge[bend right=10] node[below] {} (B1)
	(C2) edge[bend right=10] node[above] {} (B2)
	(A2) edge[bend right=32] node[below] {} (C2)
	(C2) edge[bend right=32] node[above] {} (A2)
	(B2) edge[bend right=10] node[below] {} (C2)
	(B3) edge[bend right=10] node[above] {} (A3)
	(C3) edge[bend right=10] node[above] {} (B3)
	(B3) edge[bend right=10] node[below] {} (C3)
	(A3) edge[bend right=10] node[below] {} (B3);
	\end{tikzpicture}
	\end{equation*}
	\caption{\label{fig:A2ii} Inequalities for pre-slicings.}
\end{figure}
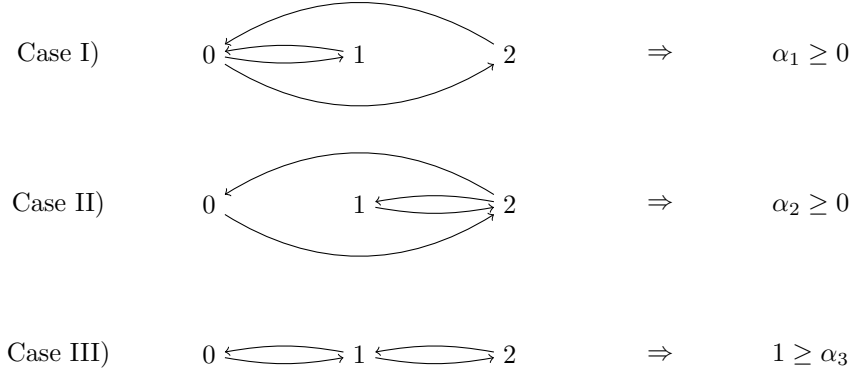

The image  is known as the coamoeba of a pair of pants when considered in $V / (\mathbb{Z}^3 \cap V)$. For a rigorous proof of this description and its relationship to the affine $A_n$ alcove decomposition, see \cite{kz, sheridan}. Thus the regions in $V$ for which there exists a $\mathbb{V}$-central charge are translates of this by $2 \mathbb{Z} \{\alpha_1, \alpha_2\}$.

\begin{figure}[h]
	\begin{equation*}
	\begin{tikzpicture}[baseline=(current  bounding  box.center), scale=1]
	\node (R1) at (0,4) {Case I)};
	\node (R2) at (4,4) {Case II)};
	\node (R3) at (8,4) {Case III)};
	\node (A1) at (1,3) {$M_{12}$};
	\node (B1) at (0,2) {$M_{01}[1]$};
	\node (C1) at (-1,3) {$M_{02}$};
	\node (A2) at (5,3) {$M_{01}$};
	\node (B2) at (4,2) {$M_{02}$};
	\node (C2) at (3,3) {$M_{12}[-1]$};
	\node (A3) at (9,3) {$M_{02}$};
	\node (B3) at (8,2) {$M_{12}$};
	\node (C3) at (7,3) {$M_{01}$};
	\node (D1) at (0,1) {$\alpha_1 > 1$};
	\node (D2) at (4,1) {$\alpha_2 > 1$};
	\node (D3) at (8,1) {$0 > \alpha_3$};
	\path[->,font=\scriptsize]
	(A1) edge node[above] {} (B1)
	(A2) edge node[above] {} (B2)
	(C1) edge node[above] {} (A1)
	(C2) edge node[above] {} (A2)
	(C3) edge node[above] {} (A3)
	(A3) edge node[above] {} (B3);
	\path[dashed, ->, font=\scriptsize]
	(B1) edge node[below] {} (C1)
	(B2) edge node[below] {} (C2)
	(B3) edge node[below] {} (C3);
	\end{tikzpicture}
	\end{equation*}
	\caption{\label{fig:ineqtrian}Inequalities associated to Harder-Narasimhan sequences.}
\end{figure}
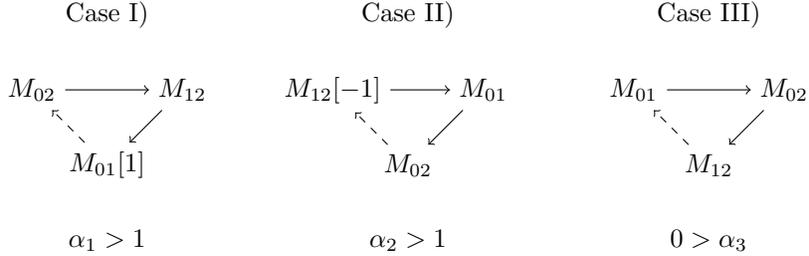

Continuing, we must now describe the stability conditions, i.e. pre-slicings which satisfy Definitions~\ref{defn:preslice}, \ref{defn:stabcond}. 
By Definition~\ref{defn:preslice:1} we must have that either all translations $M_{ij}[n]$ are semi-stable or none. Also, in order to satisfy Definition~\ref{defn:preslice:2}, equation \eqref{eq:hom} implies that the restrictions illustrated in Figure~\ref{fig:A2ii} must be made if the corresponding pairs of indicated objects are semi-stable.

Finally, we must examine when, in any of the cases from Figure~\ref{fig:A2ii}, the remaining indecomposable has a Harder-Narasimhan sequence in terms of the semi-stable indecomposables. Such sequences arise as rotations of the single exact triangle between indecomposables, each yielding a corresponding inequality illustrated in Figure~\ref{fig:ineqtrian}.

Putting these inequalities together yields a picture of the space of stability conditions $\stab{\Phi}{\mathcal{A}}$ for $A_2$ as shown in Figure~\ref{fig:A2iv}. To understand this picture, we note that there is a local homeomorphism $\pi : \stab{\Phi}{\mathcal{A}}$ to $\tilde{P}$ which (after quotienting by rotation) maps to $V$ in such a way that over the grey, yellow, blue and red regions is one to one and over the purple, green and orange regions is two to one. Over the grey central region, we must have that all indecomposables are stable.
\begin{figure}[h]
\begin{equation*}
	\begin{tikzpicture}[cross line/.style={preaction={draw=white, -, line width=6pt}}]
	\node[anchor=south west,inner sep=0] (image) at (0,0) {\includegraphics[scale=.6]{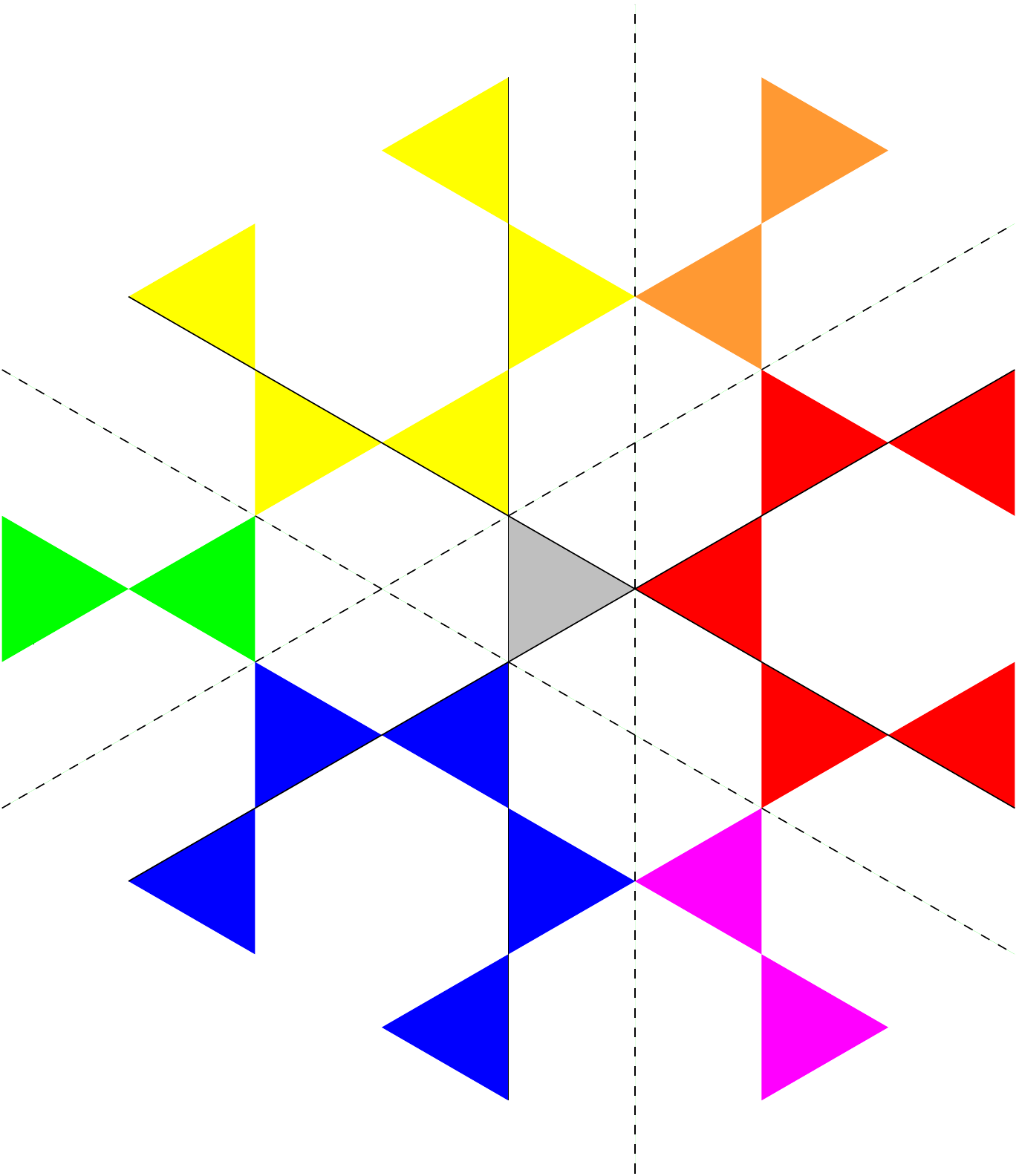}};
	\node[align=center] at (8.5,4.75) {\scriptsize \color{red} Case I};
	\node[align=center] at (2,1) {\scriptsize \color{blue} Case III};
	\node[align=center] at (2,8.5) {\scriptsize \color{yellow} Case II};
	\node[align=center] at (-.3,5.8) {\scriptsize \color{green} Cases II, III};
	\node[align=center] at (8,9) {\scriptsize \color{orange} Cases I, II};
	\node[align=center] at (8,.5) {\scriptsize \color{magenta} Case I, III};
	\end{tikzpicture} 
	\end{equation*}
	\caption{\label{fig:A2iv} Stability conditions over points in $V$.}
\end{figure}
We note that the only points on $V$ over which the set of stable objects changes (locally in $\stab{\Phi}{\mathcal{A}}$) is at the vertices of the grey triangle.

The space of stability conditions for $A_n$ appears to have a similar description in terms of alcove decomposition of its affine root system. It is an intriguing question to explore analogous structures for the $D$ and $E$ root systems.

\section{Examples from symplectic geometry}
\subsection{Fukaya and matching path categories}
Let $(X, \omega)$ be a K\"ahler manifold, $Y$ a complex curve and $f : X \to Y$ a holomorphic function with complex Morse singularities. Let $\cp{f}$ and $\cv{f}$ denote the critical points and values of $f$ respectively. We assume that the critical values of two distinct critical points are distinct. For $y  \in Y$ we write $F_y$ for the fiber $f^{-1} (y)$. Let $X^\circ = X \backslash \left( \cup_{y \in \cv{f}} F_y \right)$ be the complement of the singular fibers of $f$,  $Y^\circ = Y \backslash \cv{f}$ and denote $f^\circ : X^\circ \to Y^\circ$ for the restriction of $f$.  As described in \cite{donaldson} the symplectic orthogonal to each fiber $F_y$ defines a connection on the fiber bundle $f^\circ$. Assuming reasonable behavior of $f$ at infinity (for example, one may assume that $f$ arises as the a pencil with smooth base loci and $X$ is the complement of the base loci), we then obtain a symplectic parallel transport map 
\begin{align}
\mathbf{P} : \Pi (Y^\circ) \to Symp / Ham.
\end{align}
\begin{figure}[b]
	\includegraphics[]{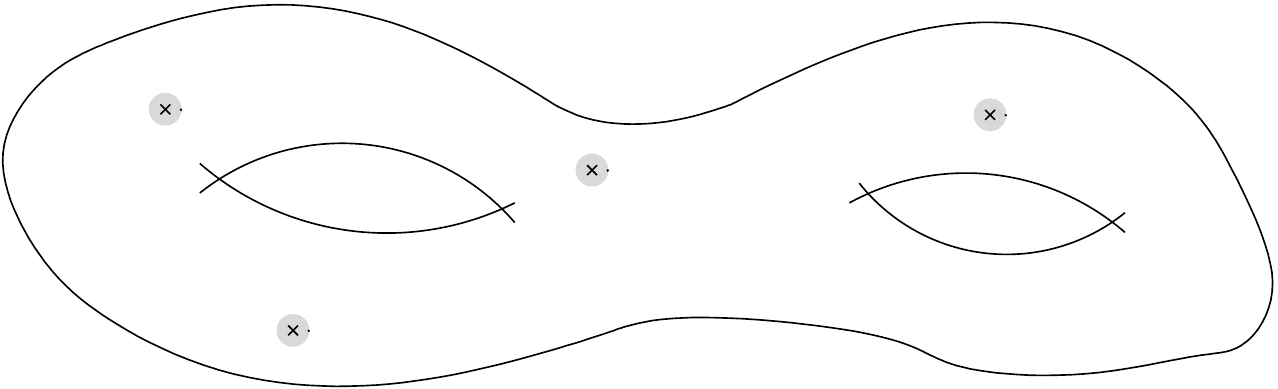}
	\caption{\label{fig:discs} $Y$ with critical points, disc neighborhoods and marked points.}
\end{figure}
This is a functor taking the fundamental groupoid $\Pi (Y^\circ )$ of $Y^\circ$ to the category of symplectic manifolds with symplectomorphisms up to Hamiltonian isotopy. Here $\mathbf{P} (y) = F_y$ and, given a path $\delta : [0,1] \to Y$ from $y_0$ to $y_1$ representing a morphism in $\Pi (Y^\circ)$, the associated symplectomorphism $\mathbf{P} (\delta )$ is obtained by symplectic parallel transport. 

\begin{definition}
	An immersed path $\delta : [0, 1] \to Y$ is called $f$-\textit{admissible} if 
	\begin{enumerate} \item $\delta ([0,1)) \subset Y^\circ$,
		\item $\delta (1) \in \cv{f}$.
	\end{enumerate}
\end{definition}

Given an $f$-admissible path $\delta$ for which $\delta (1)$ is the image $f(x)$ of the critical point $x$, we define the Lagrangian \textit{vanishing cycle} $\vc_\delta$ over $\delta$ to be the submanifold
\begin{align*}
\vc_\delta := \left\{ p \in F_{\delta (0)} : \lim_{t \to 1} \mathbf{P} (\delta|_{[0,t]}) (p) = x \right\}.
\end{align*}
We also recall that the \textit{vanishing thimble} over $\delta$ is defined as 
\begin{align*}
\vt_{\delta} = \cup_{t \in [0,1)} \vc_{\delta|_{[t,1]}} \cup \{x\}.
\end{align*}
In other words, the thimble is the union of the vanishing cycles over $\delta$ along with the critical point $x$. It is well known that $\vc_\delta$ is an exact Lagrangian sphere in $F_{\delta (0)}$ and that $\vt_\delta$ is an immersed Lagrangian ball (possibly after a suitable perturbation).

For every $y \in \cv{f}$, we fix a small embedded disc $D_y \subset Y$ so that $D_y \cap \cv{f} = y$. We also choose a point $\tilde{y} \in \partial D_y$. Then there is an unambiguous way to define the Hamiltonian isotopy class of the vanishing cycle of $y$ in $F_{\tilde{y}}$. Indeed any two embedded paths from $\tilde{y}$ to $y$ in $D_y$ yield Hamiltonian isotopic spheres. 

Now suppose $y_0, y_1 \in \cv{f}$ and $\tilde{y}_0, \tilde{y}_1$ are their neighboring points. An immersed path $\delta : [0,1] \to Y \backslash \left(\cup D_{\tilde{y}} \right)$ with $\delta (0) = \tilde{y}_0$ and $\delta (1) = \tilde{y}_1$ will be called a \textit{matching path} if $\mathbf{P} (\delta ) (\vc_0)$ is Hamiltonian isotopic to $\vc_1$. Extending such a matching path to the critical values by concatenating with $\delta_{y_0}$ and $\delta_{y_1}$, one has an immersed path in $Y$ over which lies an immersed Lagrangian sphere $\mc_\delta$ called the \textit{matching cycle} of $\delta$.

\begin{example}
Take $X,Y$ and $f$ as above. Let $\tilde{Y} \subset Y$ be a sub-surface with boundary $\partial Y \cong S^1$ and $\{y_1, \ldots, y_{r + 1}\} = \partial Y \cap \cv{f}$ the critical values of $f$ on the boundary oriented clockwise. Also assume that $\tilde{Y} = \cup_{i = 1}^r \tilde{Y}_i$ where
\begin{enumerate}
\item $y_{i - 1}, y_i \in \tilde{Y}_i$,
\item $\tilde{Y}_i \cap \tilde{Y}_j = \empty$ for $j \ne i + 1$ and,
\item $\tilde{Y}_i \cap \tilde{Y}_{i + 1}$ is a closed subset of the boundary of both.
\end{enumerate} 
Let $\mathcal{D}$ be the subcategory of the fundamental groupoid $\Pi (\tilde{Y} )$ defined as follows. The objects of $\mathbb{D}$ are  the critical values $\{y_1, \cdots, y_r\}$ of $f$ on the boundary of $\tilde{Y}$. The morphisms are defined via
\begin{align}
\Hom_{\mathcal{D}} (y_i, y_j) = \begin{cases}
\{\delta \in \Pi (\cup_{l = i}^{j - 1} \tilde{Y}_i) : \delta (0) =  y, \delta (1) =  y_j\} & \text{if } j > i, \\
\{1\} & \text{if } j = i, \\
\emptyset & \text{if } j < i.
\end{cases}
\end{align} 
In general, it could occur that the groupoid $\mathbb{V}$ may not be connected. 

Take $\mathcal{C}$ to be the Fukaya category of $X$. For $y_i, y_j \in \mathbb{V}$, we take $\mathcal{C}_{y_i y_j}$ to be the full subcategory of $\mathcal{C}$ with objects
\begin{align}\{\mc_\delta : \delta \in \Hom_{\mathbb{V}} (y_i, y_j) \}.
\end{align}

For two composable morphisms $[\delta] \in \Hom_{\mathcal{D}} (y_i, y_j)$ and $[\delta^\prime] \in \Hom_{\mathcal{D}} (y_j, y_k)$, the critical point $y_j$ is a distinguished intersection point $ \mc_\delta \cap \mc_{\delta^\prime}$. Thus for the motivic correspondence we simply take
\begin{align}
\mathcal{\cores}_{y_i y_j y_k} = \{y_j\} \times \mathcal{C}_{y_i y_j} \times \mathcal{C}_{y_j y_k}
\end{align}
along with the trivial weight $\nu = 0$.

By the result of \cite[Theorem~17.16]{seidel}, we have that $\cone{y_j}$ is a matching cycle over the matching path which is the concatenation $\delta^\prime * \delta \in \Hom_{\mathcal{D}} (y_i, y_k)$.
\begin{figure}[h]
	\begin{equation*}
	\begin{tikzpicture}[cross line/.style={preaction={draw=white, -, line width=6pt}}]
	\node[anchor=south west,inner sep=0] (image) at (0,0) {\includegraphics[scale=1]{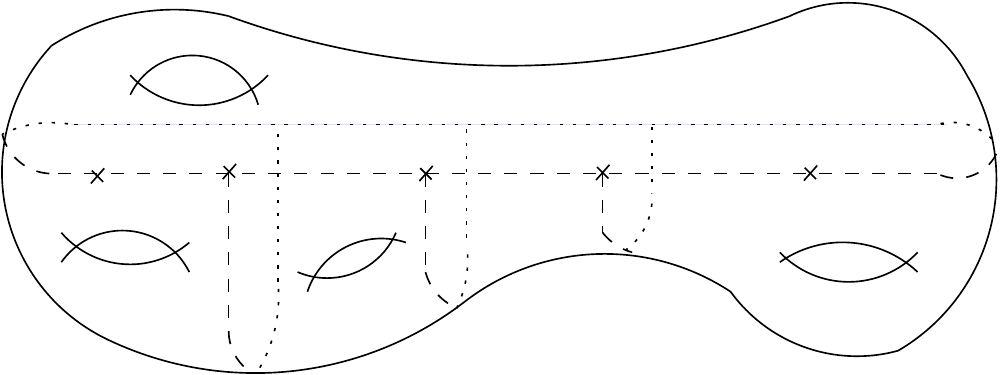}};
	\node at (9.6,1.6) {\scriptsize  $\tilde{Y}_4$};
	\node at (5.6,1.6) {\scriptsize  $\tilde{Y}_3$};
	\node at (3.3,1.6) {\scriptsize  $\tilde{Y}_2$};
	\node at (1.6,1.6) {\scriptsize  $\tilde{Y}_1$};
	\node at (8,2.25) {\scriptsize  $y_5$};
	\node at (6,2.25) {\scriptsize  $y_4$};
	\node at (4.2,2.25) {\scriptsize  $y_3$};
	\node at (2.2,2.25) {\scriptsize  $y_2$};
	\node at (1,2.25) {\scriptsize  $y_1$};
	\end{tikzpicture} 
	\end{equation*}
	\caption{\label{fig:sympex1} The regions $\tilde{Y}_i$.}
\end{figure}
\end{example}

We follow this general construction with a particular case.
\begin{example}
Take $X$ to be the exact symplectic manifold which is the smooth fiber of an $A_n$-singularity and $Y = \mathbb{C}$. For purposes of illustration, consider $X$ to be the Riemann surface $\{ (x,y) : x^2 + y^2 + x^{n + 1} = 1\}$ and take $f : X  \to \mathbb{C}$ to be a Lefschetz fibration which is a double branched cover. We may assume the critical values $\{y_0, \ldots, y_{n}\}$ of $f$ are $\{0, \ldots, n\} \subset \mathbb{R}$. Letting $y_{-1} = - \infty$ and $y_{n + 1} = \infty$, we define the regions $\tilde{Y}_i = \{z \in \mathbb{C}: \text{Im} (z) \leq 0, y_{i - 1} \leq \text{Re} (z) \leq y_i \}$ and take $\tilde{Y}$ to be the negative imaginary half-plane. Then it is clear that for $i < j$ $\Hom_{\mathcal{D}} (y_i, y_j)$ consists of one path $\delta_{ij}$, over which there is a matching cycle which is a circle in $X$. Either through a direct appeal \cite{khovanovseidel}, or through an elementary computation, one exhibits an isomorphism between the motivic correspondence in this case and that in Example~\ref{ex:An2.5}.
\end{example}

\subsection{Fukaya-Seidel categories}
Other examples that arise in symplectic geometry require additional background.  In this section we adapt a notion of the Fukaya-Seidel category to one over an arbitrary base curve $Y$ (as opposed to the conventional definition over $\mathbb{C}$). We will eschew some open questions regarding generators of this category and simply define it using a basic class of generators. The classical construction which we refer to is given in \cite{seidel}. One aim is to apply matching path techniques to give some geometrically motivated examples of 2d-4d categorified stability structures.

First we define the category. As a preliminary step, we consider a particular flow on the disc $D = \{z \in \mathbb{C} : |z| \leq 1\}$. Let $g : D \backslash \mathbb{R}_{\leq 0} \to \mathbb{R} \times \mathbb{R}_{\geq 0}$ be an orientation preserving diffeomorphism taking $re^{i\theta}$ to $(- \ln (r), \tilde{g} (\theta))$ for a monotonic $\tilde{g}$. Let $H : \mathbb{R}^2 \to \mathbb{R}_{\geq 0}$ be a function that is zero on $\mathbb{R}_{\leq 0} \times \mathbb{R}$ and, for some constant $C > 0$, equal to $x$ for $x > C$. Taking the standard symplectic form $\omega_{st}$ on $\mathbb{R}^2$, consider the time $1$ Hamiltonian flow $\tilde{\phi}_{H} : \mathbb{R} \times \mathbb{R}_{\geq 0}$ and let $\phi$ be its pullback $g^* (\tilde{\phi}_H)$. It is clear that $\phi$ is the identity on $\partial D$. Now, let $f : X \to Y$ be as before and choose a regular basepoint $z \in Y$ along with a real tangent vector $\partial_z \in T_z Y$. Consider a disc $D_z$ centered at $z$ with $- \partial_x = \partial_z$. Identifying $D_z$ with $D$, we may extend $\phi$ on $D_z$ to all of $Y$. 

To define the Fukaya-Seidel category $\fs{f}{z}$, we start with an initial $A_\infty$-category $\pfs{f}{z}$. Let $X_z = X \backslash F_z$ and define an admissible Lagrangian thimble to be an open thimble $\vt_{\delta} \subset X_z$ of an $f$-admissible path $\delta : [0, 1] \to Y$ such that $\delta (0) = z$ and $\delta^\prime (0) \ne \partial_z$. Using the quadratic holomorphic form on $X$, one may equip $\vt_\delta$ with a grading $\alpha$ and decorate it with a pin structure $\beta$. Take the triple $L_\delta = (\vt_{\delta}, \alpha, \beta)$ to be a \textit{Lagrangian brane}. 

Morphisms in $\pfs{f}{z}$ are given by Floer complexes
\begin{align*}
\Hom_{\pfs{f}{z}} (L_0, L_1) := CF^* (L_0, \phi (L_1)).
\end{align*}
The grading in this complex is determined by the respective gradings of the Lagrangian branes and the signs of the differential, composition and higher compositions are determined by pin structures. This definition assumes a universal choice of perturbation datum in order to ensure transversality. 

\begin{definition}
	The Fukaya-Seidel category $\fs{f}{z}$ of $f$ at $z$ is the category of twisted complexes $\textnormal{Tw} (\pfs{f}{z})$.
\end{definition}

When $Y = \mathbb{C}$, there is an alternative and equivalent description of $\fs{f}{z}$.  Let $\tilde{X}$ be defined as the pullback 
\begin{equation}
\begin{tikzpicture}[baseline=(current  bounding  box.center), scale=1.5]
\node (A) at (0,0) {$\tilde{X}$};
\node (B) at (1,0) {$X$};
\node (C) at (0, -1) {$\mathbb{C}$};
\node (D) at (1,-1) {$\mathbb{C}$};
\path[->,font=\scriptsize]
(A) edge node[above] {} (B)
(B) edge node[right] {$f$} (D)
(A) edge node[] {} (C)
(C) edge node[below] {$(u - z)^2$} (D);
\end{tikzpicture} 
\end{equation}
The $\mathbb{Z} / 2$-action on $\mathbb{C}$ given by taking $(u -z)$ to $- (u - z)$ lifts to a $\mathbb{Z} / 2$ symplectic action on $\tilde{X}$. This induces an action on the Fukaya category of compact Lagrangians $\mathcal{F} (\tilde{X})$. The theorem of \cite[Theorem~18.24]{seidel} gives an equivalence between the $\mathbb{Z} / 2$ invariant subcategory of a Fukaya category $\mathcal{F} (\tilde{X})^{\mathbb{Z} / 2}$. When $Y \ne \mathbb{C}$, this alternative route is not as readily available. In the first place, there is not a unique branched double cover $\tilde{Y} \to Y$ with sole ramification point $z \in Y$, and indeed, such a cover may not exist at all.

The Grothendieck group of a Fukaya-Seidel category admits a homomorphism to the relative homology 
\begin{align} \phi : K_0 (\fs{f}{z}) \to H_n ( X, f^{-1} (z)) \end{align}
which takes a Lagrangian to its homology class. Composing with the connecting homomorphism gives the map
\begin{align}
\phi^\partial : K_0 (\fs{f}{z}) \to H_{n - 1} (f^{-1} (z)).
\end{align}
Using Definition~\ref{defn:phimotext}, one may choose to study the motivic  enhancements of the groupoids $H_n ( X, f^{-1} (z))_\phi$ (respectively $H_{n - 1} (f^{-1} (z))_{\phi^\partial}$) by $\fs{f}{z}$.  While interesting, the standard $\mathbb{V}$-collection arising from this setup is difficult to access. Instead, we will reduce the dimension of our problem by considering potentials which factor through surfaces. Before discussing this approach, we consider a basic motivic  enhancement in the Fukaya-Seidel setting.

\subsection{Surface factorizations}
The next construction is inspired by the approach in \cite{gmn13} to 2d-4d wall-crossings in Hitchin systems, which will be commented on more thoroughly in section \ref{sec:hitchin}. In this case, $Y$ is the base curve $C$, $X$ is a conic bundle over cotangent bundle $T^*C$ ramified over the spectral curve $\Sigma$. As it turns out, there is a much more general framework which may be employed to provide a categorification of this setup. Given any $f : X \to Y$ we write $\tilde{\Delta}_f \subset X$ for the discriminant of $f$; namely, the set of points in $X$ for which the total derivative of $f$ does not have maximal rank. We take $\Delta_f$ to be the image $f(\tilde{\Delta}_f)$. 
	
\begin{definition} A \textit{surface factorization} $\mathcal{S} = (f,g,h, X,S,Y)$ of a holomorphic function \[f : X \to Y \] is a the commutative diagram of holomorphic functions
\begin{equation}
\begin{tikzpicture}[baseline=(current  bounding  box.center), scale=1.5]
\node (A) at (0,0) {$X$};
\node (B) at (1,0) {$S$};
\node (C) at (.5, -1) {$Y$};
	\path[->,font=\scriptsize]
	(A) edge node[above] {$g$} (B)
	(B) edge node[below right] {$h$} (C)
	(A) edge node[below left] {$f$} (C);
\end{tikzpicture} 
\end{equation}
where $S$ is a complex surface such that 
 \begin{enumerate}
 	\item  $h$ has no critical points, 
 	\item $\Delta_g$ is a smooth complex curve and $g|_{\tilde{\Delta}_g}$ is an isomorphism.
 	\item $h|_{\Delta_g}$ has at worst Morse critical points.
 \end{enumerate}
\end{definition}
This definition ensures that $f$ also has at worst Morse critical points. The symplectic version of this definition is that of a Lefschetz bifibration and can be found in \cite[Section~15e]{seidel}.

\begin{figure}[h]
	\begin{equation*}
	\begin{tikzpicture}[cross line/.style={preaction={draw=white, -, line width=6pt}}]
	\node[anchor=south west,inner sep=0] (image) at (0,0) {\includegraphics[scale=.6]{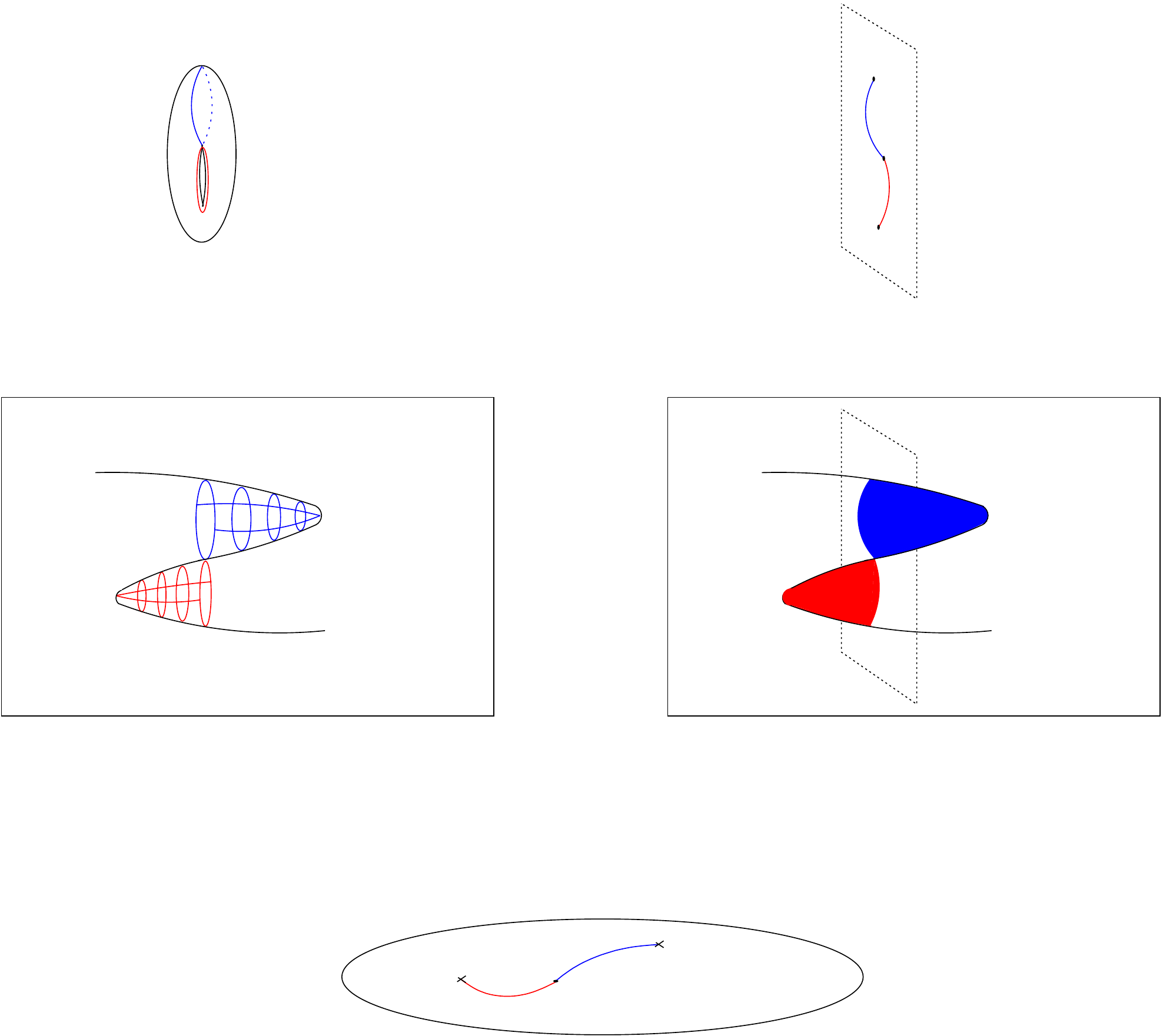}};
	\node at (4.5,6) {\small $X$};
	\node at (.7,5.8) {\small $\tilde{\Delta}_g$};
	\node at (7.6,5.8) {\small $\Delta_g$};
	\node at (11.5,6) {\small $S$};
	\node at (8,.5) {\small $Y$};
	\node at (1.5,10.3) {\small $F_q$};
	\node at (8,10.3) {\small $h^{-1} (q)$};
	\node (X1) at (5.3,5) {};
	\node (S1) at (6.7,5) {};
	\node (Y1) at (4.5,1.5) {};
	\node (Y2) at (7.5,1.5) {};
	\node (X2) at (2.5,2.7) {};
	\node (S2) at (9.5,2.7) {};
	\node (Ff) at (4.5,9) {};
	\node (Ff2) at (2,7.8) {};
	\node (X3) at (2,6.7) {};
	\node (Fh2) at (8.7,7.8) {};
	\node (S3) at (8.7,6.7) {};
	\node (Fh) at (6.7,9) {};
	\path[->,font=\small]
	(X2) edge node[below=4pt] {$f$} (Y1)
	(S2) edge node[below=4pt] {$h$} (Y2)
	(X1) edge node[below=4pt] {$g$} (S1)
	(Ff) edge node[below=4pt] {$g|_{F_z}$} (Fh);
	\path[right hook->,font=\small]
	(Ff2) edge node {} (X3)
	(Fh2) edge node {} (S3);
	\end{tikzpicture} 
	\end{equation*}
	\caption{\label{fig:sf1} Vanishing thimbles, cycles and matching paths of a surface factorization.}
\end{figure}
Given a surface factorization of $f$, an $f$-admissible path $\delta$ will also be $h|_{\Delta_g}$-admissible. Thus there are two vanishing thimbles, $\vt^f_\delta$ an exact Lagrangian ball in $X$ and an interval $\vt^h_\delta \subset \Delta_g$, associated to such a path. The thimble $\vt^f_\delta \subset X$ has another feature. Namely, for any regular value $q \in Y$, the map $g$ restricted to the fiber $F_q = f^{-1} (q)$ is a complex Morse function with values in $h^{-1} (q)$ and critical values in $\Delta_g \cap h^{-1} (q)$. Furthermore, for $q = \delta (t)$ and $t \ne 1$, the vanishing cycle $\vc^f$ of $\delta$ over $q$ is a matching cycle for a matching path $\bar{\delta}^q : [0,1] \to h^{-1} (q)$.
Letting $D^+$ be the upper half disc $\{z \in \mathbb{C}: |z| \leq 1, \text{Im} (z) \geq 0\}$, this implies that to any $f$-admissible path $\delta$ from a basepoint $z \in Y$, there is a function 
\begin{align}
u_\delta : (D^+, \partial D^+) \to (S, \Delta_g \cup h^{-1} (z))
\end{align}
which takes the interval $I_t = \{z \in D^+ : \text{Im} (z) = t\}$ to the matching path $\bar{\delta}^{\delta (t)}$. Such a disc is determined up to isotopy relative the boundary and results in a homomorphism
\begin{align} \label{eq:ktoh2}
\chi_{\mathcal{S}} : K_0 (\fs{f}{z}) \to H_2 (S, \Delta_g \cup h^{-1} (z); \mathbb{Z}).
\end{align}
This setup is illustrated in Figure~\ref{fig:sf1}.

Fixing a surface factorization $\mathcal{S}$ of $f$ and choice of a basepoint $z \in Y$ over which both $f$ and $h|_{\Delta_g}$ are regular, we define a groupoid $\mathbb{V}_\mathcal{S}$. The objects of $\mathbb{V}_\mathcal{S}$ will be defined as
\begin{align} \label{eq:defgamma}
Ob (\mathbb{V}_\mathcal{S} ) = h^{-1} (z) \cap \Delta_g.
\end{align}
Let $\Gamma \subset H_1 (\Delta_g; \mathbb{Z})$ be the subgroup generated by all matching cycles of $h|_{\Delta_g}$. 
To describe the morphisms in $\mathbb{V}_\mathcal{S}$, let $\Gamma^\partial \subset H_1 (\Delta_g, \mathbb{V}_\mathcal{S}; \mathbb{Z})$ be the subgroup generated by vanishing thimbles of $h|_{\Delta_g}$. Consider the connecting homomorphism from long exact sequence of relative homology
\begin{align*}
\delta : H_1 (\Delta_g, \mathbb{V}_\mathcal{S}; \mathbb{Z}) \to H_0 (\mathbb{V}_\mathcal{S}; \mathbb{Z}).
\end{align*}
For $i \in \mathbb{V}_\mathcal{S}$ take $\bar{i}$ to be the class in $H_0 (\mathbb{V}_\mathcal{S}; \mathbb{Z})$ and define
\begin{align*}
\Hom_{\mathbb{V}_\mathcal{S}} (i, j) := \delta^{-1} (\bar{j} - \bar{i}) \cap \Gamma^\partial .
\end{align*}
We note that $\mathbb{V}_\mathcal{S}$ is a connected groupoid if and only if $\Delta_g$ is connected. Using the language of Definition~\ref{defn:gpdphi}, $\mathbb{V}_\mathcal{S}$ is the groupoid induced by $\delta$ and $A = \{[p] : p \in h^{-1} (z) \cap \Delta_g\}$. A standard  $\mathbb{V}_\mathcal{S}$-collection associated to this groupoid will be examined in \ref{sec:stcoll}. First we will consider a more restrictive  enhancement in this context.

\subsection{Motivic  enhancements by $\fs{f}{z}$}
We now describe a collection of subcategories $\{\mathcal{C}_{ij}\}$ of $\fs{f}{z}$ lying over the category  $\mathcal{D}$. If the resulting collection yields a $\mathbb{V}_\mathcal{S}$-collection, by Definition~\ref{defn:Vcoll}, we must provide an odd motivic   enhancement $\Phi = (\{\mathcal{C}_{ij}, \epsilon_{ij}\}, \{(\cores_{ijk}, \nu_{ijk})\})$ of $\mathbb{V}_\mathcal{S}$ for which $K_\Phi (\mathbb{V}_\mathcal{S}) \cong \mathbb{V}_\mathcal{S}$. 

We begin by taking the ambient ind-constructible $A_\infty$-category to be $\fs{f}{z}$.

Given an $f$-admissible path $\delta$, recall that the vanishing cycle $\vc_\delta^f$ is Hamiltonian isotopic to a matching cycle $\mathcal{M}_{\bar{\delta}}$ over a matching path $\bar{\delta} : [0,1] \to F_z$. Note that $\bar{\delta}$ is determined up to isotopy in $F_z \backslash \mathbb{V}_\mathcal{S}$. Using this fact, we will choose tangent vectors in $T_i (F_z)$ for each $i \in \mathbb{V}_\mathcal{S}$ assume that $\bar{\delta}^\prime (0)$ and $\bar{\delta}^\prime (1)$ agree with these choices. This enables a grading of $\vt^f_\delta$ to induce one on $\vt^h_\delta$.  Such a grading is part of a brane structure on $L^f_\delta = (\vt_\delta^f, \alpha, \beta)$ (inducing one on $\vt_{\delta}^h$).  Of course, the grading on $\vt_\delta^h$ induces an orientation  yielding an oriented path in $\Delta_g$ that connects two points in $\mathcal{V}$. Write $[L^g_\delta]$ for the associated homology class in $H_1 (\Delta_g, \mathbb{V}_\mathcal{S}; \mathbb{Z})$ and define $\mathcal{C}_{ij}$ to be the full subcategory of $\fs{f}{z}$ with objects
\begin{align}
Ob (\mathcal{C}_{ij}) = \left\{ L_\delta^f : [L^g_\delta] \in \Gamma_{ij} \right\}.
\end{align}
The maps $\epsilon_{ij}$ are simply given by $\epsilon_{ij} (L_\delta^f) = [L_\delta^g]$.

Now, given $L_{\delta_1}^f \in \mathcal{C}_{ij}$ and $L_{\delta_2}^f \in \mathcal{C}_{jk}$ there is a unique intersection point $q \in L_{\delta_1} \cap L_{\delta_2}$ at $j$ whose Maslov index is difference of the orientations of $\vt_{\delta_1}^f$ and $\vt_{\delta_2}^f$ and therefore odd. We take $\cores_{ijk}$ to be the union of these intersections over such objects.   It is known that concatenation of vanishing thimbles in $\Delta_g$ corresponds to a mutation of underlying paths and that the categorical incarnation of this is to take the cone of a morphism between the two thimbles of $f$. It is now not difficult to define the functor $\phi$. In particular, $\phi_{ij}$ simply takes $[L_\gamma^f] = [\vt^h_\gamma]$. 

\begin{remark} \label{remark:wcfgen}
	While it is unclear whether the above collection of categories can be assembled into a non-standard $\mathbb{V}_\mathcal{S}$-collection $\Phi$, it is certain that any such collection will not have a $\Phi$-stability condition. This is due to the fact that such stability conditions require $\mathcal{C}_{ij}$ to be stable under even translations, which would force the zero morphism to be contained in the support of the motivic correspondence between two such categories. This suggests again that a more flexible framework of stability conditions in this context may be more useful in the non-standard setting.
\end{remark}

\subsection{Standard $\mathbb{V}_\mathcal{S}$-collections} \label{sec:stcoll}
The motivic  enhancement considered in the previous section of a surface factorization is not standard. In other words, the motivic correspondence does not contain the full $\Ext^1$ group. This does obstruct a direct application of the wall-crossing formula in Theorem~\ref{thm:wcfhall} to the symplectic case. However, an analogous formula is expected to apply in certain non-standard contexts (see Remark~\ref{remark:wcfgen}). Alternatively, we may consider a larger $\mathbb{V}_\mathcal{S}$-collection which is standard. To obtain this collection, recall that equation~\eqref{eq:ktoh2} gave the homomorphism 
\begin{align}
\chi_{\mathcal{S}} : K_0 (\fs{f}{z}) \to H_2 (S, \Delta_g \cup h^{-1} (z); \mathbb{Z}).
\end{align}
 Consider the triple $h^{-1} (z) \subset \Delta_g \cup h^{-1} (z) \subset S$ and the connecting homomorphism $\tilde{\delta}$ of the long exact sequence of the homology of this triple. Along with excision, we obtain the map
\begin{align}
\chi^\prime_{\mathcal{S}} : K_0 (\fs{f}{z}) \to H_1 (\Delta_g ,  h^{-1} (z) \cap \Delta_g; \mathbb{Z}) = H_1 (\Delta_g, \mathbb{V}_\mathcal{S} ; \mathbb{Z}).
\end{align}
As the image of $\chi_{\mathcal{S}}$ is generated by of a set of surfaces whose boundary along $\Delta_g$ is a vanishing thimble $\vt^h$, it follows that the image of $\chi^\prime_{\mathcal{S}}$ is $\Gamma$ as defined after equation \eqref{eq:defgamma}. Using this data with Definition~\ref{defn:phimotext} we obtain a standard $\mathbb{V}$-collection.

\begin{definition}
	Given a surface factorization $\mathcal{S}$, the \textit{standard $\mathbb{V}_\mathcal{S}$-collection} is the motivic  enhancement induced by $\delta, \chi^\prime_S$ and $\mathbb{V}_\mathcal{S}$.
\end{definition}

Let us examine the examples of an $A_n$ singularity in general.
\begin{example}
	The $A_n$-singularity $f: \mathbb{C}^3 \to \mathbb{C}$ is given by  \[f(x_1, x_2, x_3) = x_1^{n + 1} + x_2^2 + x_3^2 .\]
	The Fukaya-Seidel category of this singularity has been thoroughly studied in \cite{kseid}, so we will summarize briefly and put it into the context of a 2d-4d categorification. 
	To obtain Morse singularities, we choose a perturbation 
	\[ f(x_1, x_2, x_3) = x_1^{n + 1} - (n + 1)\varepsilon x_1 + x_2^2 + x_3^2 \]
	which admits the factorization $f = hg$ where $g : \mathbb{C}^3 \to \mathbb{C}^2$ via $g (x_1,x_2, x_3) = (x_1, x_2^2 + x_3^2)$ and $h : \mathbb{C}^2 \to \mathbb{C}$ via $h(u_1, u_2) = u_1^{n + 1}  - (n + 1) \varepsilon u_1 + u_2$. One obtains  
	\begin{align}
	\tilde{\Delta}_g & = \{(x, 0, 0): x \in \mathbb{C}\}, \\ 
	\Delta_g & = \{(x, 0) : x \in \mathbb{C} \} \cong \mathbb{C}.
	\end{align} 
	So $h|_{\Delta_g} : \mathbb{C} \to \mathbb{C}$ is simply the polynomial $x^{n + 1} - \varepsilon x$.  The critical points and values are then 
	\begin{align}
	\cp{f}  = \{(\zeta \sqrt[n]{\varepsilon}, 0, 0): \zeta^n = 1\} , \\ 
	\cv{f} = \cv{h|_{\Delta_g}}  = \{ -n \zeta \sqrt[n]{\varepsilon} : \zeta^{n} = 1 \}.
	\end{align}
	Choose a base point $z \not\in \cv{f}$.

	In this case, there are no matching paths for $h: \Delta_g \to \mathcal{C}$ so that $\Gamma = \{0\}$ (which is also clear from the fact that $\Gamma \subset H_1 (\mathbb{C}; \mathbb{Z})$). Thus $\mathbb{V}_\mathcal{S}$ is a connected trivial groupoid with $(n + 1) = | h|_{\Delta_g}^{-1} (z)|$ objects. Recalling the equation~\eqref{eq:phiAn} from Example~\ref{ex:An1} dealing with representations of the $A_n$-quiver $Q = (Q_0, Q_1)$ and labeling $Ob (\mathbb{V}_\mathcal{S} )= h^{-1} (z) \cap \Delta_g$ as $\{0, \ldots, n\}$, we see that there is a commutative diagram realizing the connecting homomorphism of the homology sequence with $\phi$
	\begin{equation*}
	\begin{tikzpicture}[baseline=(current  bounding  box.center), scale=1.5]
	\node (A) at (.5,1) {$\hbox{}$};
	\node (B) at (2,1) {$H_1 (\Delta_g, \mathbb{V}_\mathcal{S}; \mathbb{Z})$};
	\node (C) at (4, 1) {$H_0 (\mathbb{V}_\mathcal{S}; \mathbb{Z})$};
	\node (D) at (6,1) {$H_0 (\Delta_g; \mathbb{Z})$};
	\node (ad)  at (7.3,1) {};
	\node (B1) at (2, 0) {$\mathbb{Z}^{Q_0}$};
	\node (C1) at (4,0) {$\mathbb{Z}^{n + 1}$};
	\path[->,font=\scriptsize]
	(A) edge node[above] {} (B)
	(B) edge node[above] {$\delta$} (C)
	(C) edge  node[above] {} (D)
	(D) edge node[above] {} (ad)
	(B) edge node[left] {$\cong$} (B1)
	(C) edge node[right] {$\cong$} (C1)
	(B1) edge node[above] {$\phi$} (C1);
	\end{tikzpicture} 
	\end{equation*}
Thus in this case we recover the $\mathbb{V}$-collection from Example~\ref{ex:An4}.
\end{example}

\subsection{Categorical framework for 2d-4d wall-crossing formulas and Hitchin 
integrable systems} \label{sec:hitchin}
We briefly recall the well-known setup for the Hitchin integrable system.
One can interpret the data for such a system  as a surface factorization defined in the following way. Let $Y = C$ be the base curve of the Hitchin system and $S = T^* C$ is its cotangent bundle. Take $K$ to be the canonical bundle on $C$ and $\mathcal{B} = \oplus_{i = 2}^n H^0 (C, K^{\otimes i})$ the set of tuples of differentials and $u = (\phi_2, \ldots, \phi_n) \in \mathcal{B}$. The spectral curve $\Sigma_u$ equals the set of solutions in $T^* C$ to the equation $\sum_{i = 2}^n \phi_i \lambda^{n - i} = 0$. The local solutions to the equation yield $n$-differential forms $\lambda_i$ on $C$ which pullback to a single form $\lambda \in \Omega^1 (C)$.

Suppose $L$ is a line bundle for which $L^{\otimes 2} = K^{\otimes n}$ and define 
\[ X = \left\{ (x_1, x_2, \lambda) \in L \oplus L \oplus K : \sum_{i = 2}^n \phi_i \lambda^{n - i} = x_1^2 + x_2^2 \right\} \] 
When the $\phi_i$ are meromorphic forms there are modifications of the above construction see e.g. \cite[Section~8]{ks08} or \cite[Section~3]{smith15}. Letting $g$ be the projection to $\lambda$, $h$ the bundle projection and $f$ their composition, we obtain a surface factorization of $f$ for generic choices of $u$.

This surface factorization leads to a strictly 4d set-up, and to obtain the full 2d-4d system, one must add a basepoint $z \in C$ which gives us a reference fiber for the Fukaya-Seidel category $\fs{f}{z}$. Using the choice of basepoint gives us the standard $\mathbb{V}_\mathcal{S}$-collection. Integrating the $1$-form $\lambda$ then yields a central charge
\begin{align}
Z_\gamma := \frac{1}{\pi} \int_\gamma \lambda. 
\end{align}
To define the slicing, one utilizes WKB, or spectral networks as defined in \cite{gmn12, gmn13b, gmn13}. Recall that a spectral network is a collection of curves on $C$, and in this case we assume they are WKB curves. Such a WKB curve of phase $\theta$ comes with a pair of lifts to $\Sigma_u$ such that $\arg (\left< h_* (\lambda_i) - h_* (\lambda_j) , \partial_t \right> ) = \theta$ along the curve where $\lambda_i$ and $\lambda_j$ are the two local forms for $\lambda$. A finite spectral network consists of a finite trivalent graph on $C$ whose edges are WKB curves satisfying the conditions:
\begin{enumerate}
	\item at any trivalent vertex, the three adjacent edges are labeled by $ij$, $jk$ and $ki$,
	\item any leaf must be a critical value or $z$
	\item any critical value leaf $q$ must be labeled by $ij$ where, near the critical point $p$ over $q$, the vanishing cycle of the curve is $ij$.
\end{enumerate} 
The networks we will consider will also satisfy a the following additional admissibility conditions:  
\begin{enumerate}
	\item[(4)] for any point $c$ in the interior WKB curve labeled by $ij$, there exists no $k \in \Sigma_u$ lying on the line segment between $i$ and $j$ in $T_c^* C$. 
	\item[(5)] over any trivalent vertex, the triangle $ijk$ in the cotangent fiber does not contain any $l \in \Sigma_u$ lying in its interior.
\end{enumerate}
Note that these conditions are vacuous for a type $A_1$-system.
To any such admissible network, one may define an embedded Lagrangian in $X$ as follows. For each WKB curve $\delta$ in a given network which has an endpoint on a critical value, we take the vanishing thimble $\vt_\delta$ of $f$. Note that by condition (4), we may assume that the vanishing cycle over any point on the interior of such a curve is the matching cycle over a matching path isotopic to the straight line segment from $i$ to $j$. Consider a trivalent vertex $\tilde{z}$ of WKB curves $\delta_1, \delta_2$ and $\delta_3$ of types $ij$, $jk$ and $ik$. Note that, by property (4), the vanishing cycles $\vc_{\delta_1}, \vc_{\delta_2}$ are matching cycles over straight line curves connecting $i$ to $j$ and $j$ to $k$ respectively. Concatenating these paths gives the Lagrangian sum $\vc_{\delta_1} \# \vc_{\delta_2}$. By property (5), after performing an isotopy near the trivalent vertex, this yields a matching cycle over the line segment connecting $i$ to $k$. It is thus clear that over every point on a spectral network there is a well defined matching cycle. Assembling these Lagrangians over the spectral network yields a Lagrangian submanifold of $X$. 
\begin{figure}
	\begin{equation*}
	\begin{tikzpicture}[cross line/.style={preaction={draw=white, -, line width=6pt}}]
	\node[anchor=south west,inner sep=0] (image) at (0,0) {\includegraphics[scale=.8]{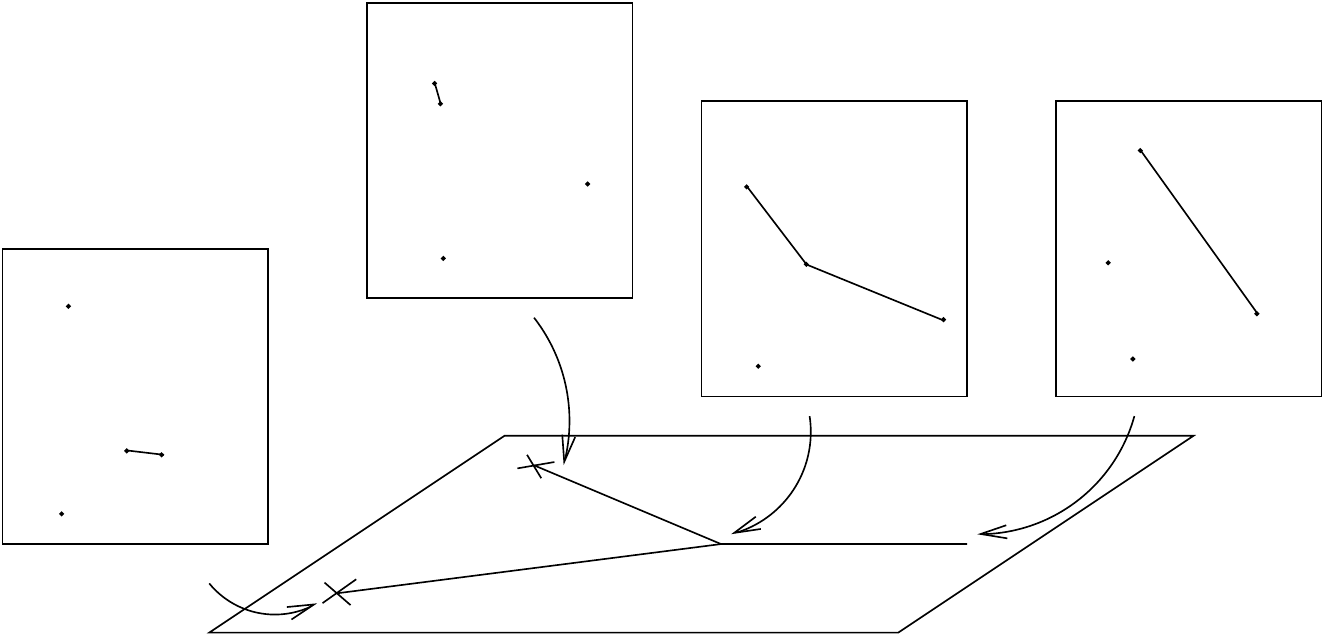}};
	\node[align=center] at (5,3.6) {\tiny $k$};
	\node[align=center] at (3.3,4.5) {\tiny $i$};
	\node[align=center] at (3.35,4.2) {\tiny $j$};
	\node[align=center] at (.3,2.8) {\tiny $i$};
	\node[align=center] at (.8,1.5) {\tiny $j$};
	\node[align=center] at (1.5,1.4) {\tiny $k$};

\node[align=center] at (7.6,2.4) {\tiny $k$};
\node[align=center] at (6.1,3.9) {\tiny $i$};
\node[align=center] at (6.3,2.9) {\tiny $j$};

\node[align=center] at (10.4,2.6) {\tiny $k$};
\node[align=center] at (9.3,4.1) {\tiny $i$};
\node[align=center] at (9,3.2) {\tiny $j$};
	
	\node[align=center] at (3.8,.3) {\tiny $jk$};
	\node[align=center] at (4.5,1) {\tiny $ij$};
	\node[align=center] at (6.8,.6) {\tiny $ik$};
	\node[align=center] at (7.9,.6) {\tiny $z$};
	\end{tikzpicture} 
	\end{equation*}
	\caption{\label{fig:admis} An admissible spectral network.}
\end{figure}

Then we may define the following collection of subcategories of $\fs{z}{f}$. For distinct objects $i, j \in Ob (\mathbb{V}_\mathcal{S} )$ and $\theta \in \mathbb{R}$, let $\mathcal{P}_{ij} (\theta )$ consist of all such Lagrangian branes obtained from spectral networks of phase $\theta$ whose edge incident to $z$ is labeled by $ij$. It is not hard to see that $\mathcal{P}_{ij}$ is a subcategory of $\mathcal{C}_{ij}$. 
\begin{conjecture}
	The collection $\mathcal{P} = \{\mathcal{P}_{ij} (\theta )\}$ form a pre-slicing of the standard $\mathbb{V}_\mathcal{S}$-collection and $(Z , \mathcal{P})$ a $\mathbb{V}_\mathcal{S}$-stability condition.
\end{conjecture}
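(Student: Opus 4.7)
The plan is to verify, in order, the two pre-slicing conditions of Definition~\ref{defn:preslice} and then the three conditions of Definition~\ref{defn:stabcond}, saving the existence of Harder--Narasimhan sequences for last as the main technical obstacle.

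First I would check the shift compatibility $\mathcal{P}_{ij}(\theta + 1) = \mathcal{P}_{ji}(\theta)[1]$ required by~\ref{defn:preslice:1}. This reduces to the observation that in $\fs{f}{z}$ the shift $[1]$ reverses the orientation of any vanishing thimble, and on the underlying spectral network this swaps the labeling $ij \leftrightarrow ji$ of every edge incident to $z$. At the same time, the WKB condition $\arg\langle h_*(\lambda_i - \lambda_j), \partial_t\rangle \equiv \theta$ becomes $\arg\langle h_*(\lambda_j - \lambda_i), \partial_t\rangle \equiv \theta + 1$ under this swap, producing exactly the required phase shift by one. The phase-compatibility axiom in Definition~\ref{defn:stabcond}, namely $\arg Z(\phi_{ij}([E])) \equiv \theta$, is built into the construction: the central charge is $Z_\gamma = \tfrac{1}{\pi}\int_\gamma \lambda$, and on a WKB edge of phase $\theta$ the integrand has argument $\theta$ by definition. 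The additivity $(\lambda_i - \lambda_j) + (\lambda_j - \lambda_k) = \lambda_i - \lambda_k$ at a trivalent vertex with labels $(ij),(jk),(ik)$ ensures that the total integral over any Lagrangian built from a single-phase network still has argument $\theta$.

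Next I would handle the $\Ext^0$-vanishing condition~\ref{defn:preslice:2}. Given two spectral-network Lagrangians $L_1, L_2$ of phases $\theta_1 > \theta_2$, the Floer complex $CF^*(L_1, \phi(L_2))$ is computed from intersections of the underlying matching cycles in each fiber of $f$, and the WKB monotonicity forces the symplectic action functional between any pair of generators to have a definite sign, which rules out degree-zero generators when $\theta_1 > \theta_2$. This is a direct adaptation of the classical positivity-of-action argument for slicings on Fukaya categories. With this in hand, the support property reduces to uniform boundedness of periods of $\lambda$ over classes in the image of $\chi^\prime_\mathcal{S}$, for which an appropriate quadratic form $Q$ is the Hodge-theoretic intersection pairing on $H_1(\Delta_g, \mathbb{V}_\mathcal{S}; \mathbb{Z})$.

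The hard part will be the existence of Harder--Narasimhan sequences for an arbitrary $E \in \mathcal{C}_{ij}$. My approach would be geometric: represent $E$ by an embedded Lagrangian in $X$ whose projection under $g$ is a path in $\Delta_g$, and WKB-flow this path until it decomposes into subpaths of monotonically decreasing phases $\theta_1 > \cdots > \theta_n$ separated by trivalent junctions. The resulting matching-cycle Lagrangians $A_1, \ldots, A_n$ are the semi-stable pieces in $\mathcal{P}_{i_{k-1}i_k}(\theta_k)$, and the connecting morphisms $\delta_k \in \Ext^1(A_k, E_{k-1})$ are identified with intersection points at the trivalent junctions, in the spirit of \cite[Theorem~17.16]{seidel}. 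Finiteness of the decomposition is enforced by the support property, since only finitely many classes with periods below a given bound can appear, and uniqueness of the resulting sequence then follows from Proposition~\ref{prop:uniquehn}. The genuinely delicate point, and the reason the statement is posed as a conjecture, is to ensure that the WKB-flow decomposition terminates in a finite admissible network in all cases; this appears to require additional convergence and genericity arguments for higher-rank systems, beyond the $A_1$ case where the admissibility conditions are vacuous.
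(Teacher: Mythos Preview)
The statement you are attempting to prove is labeled a \emph{Conjecture} in the paper, and the paper does not supply a proof; the only remark following it is the sentence ``In particular, the Conjecture means that the Lagrangian submanifolds associated with spectral networks are special.'' There is therefore no proof in the paper to compare your proposal against.

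That said, your plan is a reasonable outline of what a proof would have to accomplish, and you correctly isolate the genuine obstruction at the end: the existence of Harder--Narasimhan sequences via a terminating WKB-flow decomposition is exactly the step that is not established and is the reason the authors state this as a conjecture rather than a theorem. Your arguments for the shift compatibility~\ref{defn:preslice:1} and the phase condition in Definition~\ref{defn:stabcond} are essentially formal consequences of the definitions and look sound. The sketch for the $\Ext^0$-vanishing~\ref{defn:preslice:2} and for the support property, however, are more aspirational than rigorous: the ``positivity-of-action'' argument you invoke is not a standard off-the-shelf result for matching-cycle Lagrangians over spectral networks in this generality, and the Hodge-theoretic quadratic form you propose for the support property would need to be checked against the actual support of the semi-stable classes. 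So even setting aside the HN existence, your plan contains nontrivial gaps that would themselves require substantial work.
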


In particular, the Conjecture means that the Lagrangian submanifolds associated 
with spectral networks are special.


\begin{thebibliography}{1}
	
	\bibitem{bridgeland}
	T.~Bridgeland.
	\newblock Stability conditions on triangulated categories.
	\newblock {\em Ann. of Math. (2)}, 166:317--345, 2007.

	
	\bibitem{donaldson}
	S.~Donaldson.
	\newblock Symplectic submanifolds and almost-complex geometry.
	\newblock {\em J. Differential Geom.}, 44:666--705, 1996.

	\bibitem{gmn12}
	D.~Gaiotto, G.~Moore, A.~Neitzke
	\newblock Wall-crossing in coupled 2d-4d systems.
	\newblock {\em J. High Energy Phys. 2012}, pages 1--166.
	
	\bibitem{gmn13b}
	D.~Gaiotto, G.~Moore, A.~Neitzke
	\newblock Spectral Networks.
	\newblock {\em Annales Henri Poincar\'e}, 14:1643--1731, 2013.
	
	\bibitem{gmn13}
	D.~Gaiotto, G.~Moore, and A.~Neitzke.
	\newblock Wall-crossing, {H}itchin systems, and the {W}{K}{B} approximation.
	\newblock {\em Adv. Math.}, 234:239--403, 2013.
	
	\bibitem{gmw}
	D.~Gaiotto, G.~Moore, E.~Witten.
	\newblock Algebra of the Infrared: String Field Theoretic Structures in Massive $\mathcal{N}=(2,2)$ Field Theory In Two Dimensions.
	\newblock{\em  arXiv:1506.04087}.

	
	\bibitem{kz}
	 G.~Kerr, I.~Zharkov.
	\newblock Phase tropical hypersurfaces.
	\newblock 2016.
	\newblock arxiv:1610.05290.
	
	\bibitem{khovanovseidel}
	M.~Khovanov, P.~Seidel.
	\newblock Quivers, {F}loer cohomology and braid group actions.
	\newblock {\em J. Amer. Math. Soc.}, 15:203--271, 2002.

	\bibitem{ks08}
	M.~Kontsevich and Y.~Soibelman.
	\newblock Stability structures, motivic donaldson-thomas invariants and cluster
	transformations.
	\newblock 2007.
	\newblock arxiv:1408.2673.
	
	\bibitem{seidel}
	P.~Seidel.
	\newblock {\em Fukaya categories and Picard-Lefschetz theory}.
	\newblock Zurich Lectures in Advanced Mathematics. European Mathematical
	Society (EMS), Z\"urich, 2008.
	
	\bibitem{sheridan}
	N.~Sheridan.
	\newblock On the homological mirror symmetry conjecture for pairs of pants.
	\newblock {\em J. Differential Geom.}, 89(2):271--367, 2011.
	
	\bibitem{smith15}
	I.~Smith.
	\newblock Quiver algebras as {F}ukaya categories.
	\newblock {\em Geom. Topol.}, 19(5):2557--2617, 2015.
	
\end{thebibliography}
\end{document}